\newtheorem{theorem}{Theorem}[section]
\newtheorem{lemma}[theorem]{Lemma}
\newtheorem{corollary}{Corollary}[section]
\theoremstyle{definition}
\newtheorem{algorithm}{Algorithm}[section]
\newtheorem{example}{Example}[section]
\newtheorem{Assumption}{Assumption}[section]
\theoremstyle{remark}
\numberwithin{equation}{section}
\begin{document}

\title[Landweber-Kaczmarz method in Banach spaces]
{Landweber-Kaczmarz method in Banach spaces with inexact inner solvers}

\author{Qinian Jin}

\address{Mathematical Sciences Institute, Australian National
University, Canberra, ACT 2601, Australia}
\email{qinian.jin@anu.edu.au} \curraddr{}


\def\ep{\varepsilon}


\begin{abstract}
In recent years Landweber(-Kaczmarz) method has been proposed for solving nonlinear
ill-posed inverse problems in Banach spaces using general convex penalty functions.
The implementation of this method involves solving a (nonsmooth) convex minimization
problem at each iteration step and the existing theory requires its exact resolution
which in general is impossible in practical applications. In this paper we propose a
version of Landweber-Kaczmarz method in Banach spaces in which the minimization problem
involved in each iteration step is solved inexactly. Based on the $\ep$-subdifferential
calculus we give a convergence analysis of our method. Furthermore, using Nesterov's strategy,
we propose a possible accelerated version of Landweber-Kaczmarz method. Numerical
results on computed tomography and parameter identification in partial differential equations
are provided to support our theoretical results and to demonstrate our accelerated
method.
\end{abstract}

\def\p{\partial}
\def\d{\delta}
\def\l{\langle}
\def\r{\rangle}
\def\C{\mathcal C}
\def\D{\mathscr D}
\def\a{\alpha}
\def\b{\beta}
\def\d{\delta}

\def\la{\lambda}
\def\ep{\varepsilon}
\def\Ga{\Gamma}
\def\R{{\mathcal R}}
\def\J{{\mathcal J}}
\def\A{\mathcal A}
\def\B{\mathcal B}
\def\a{\alpha}
\def\d{\delta}

\def\bx{\bf x}
\def\by{{\bf y}}
\def\bA{\bf A}
\def\bV{\bf V}
\def\bW{\bf W}

\def\N{\mathcal N}
\def\R{\mathcal R}
\def\X{\mathcal X}
\def\Y{\mathcal Y}
\def\B{\mathcal B}
\def\A{\mathcal A}
\def\H{\mathcal H}
\def\bA{{\bf A}}
\def\bx{{\bf x}}
\def\bb{{\bf b}}
\def\ba{{\bf a}}
\def\bV{{\bf V}}
\def\bW{{\bf W}}
\def\bQ{{\bf Q}}
\def\bD{{\bf D}}
\def\D{\mathscr D}

\maketitle
\section{\bf Introduction}

Regularization of inverse problems has been considered extensively and significant progress has been made; 
see \cite{EHN96,IJ2014,JT2009,KNS2008,TJ2003} and references therein. Due to the demand of capturing special
features of the reconstructed objects and the demand of dealing with general noise, regularization in Banach
spaces has emerged as a highly active research field and many new regularization methods have been proposed and investigated
in recent years; one may refer to \cite{BH2012,HW2013,Jin2015,JZ2013,JZ2014,KH2010,SKHK2012} and the references therein for recent developments.

Because of its simplicity and relatively small complexity per iteration, Landweber iteration and its Kaczmarz version
have received extensive attention in inverse problem community \cite{CHLS2008,HLS2007,HKLS2007,HNS95,KS2002}. 
In recent years, several versions of Landweber iteration
has been formulated in Banach spaces, see \cite{BH2012,Jin2012,JW2013,SLS2006}. When solving ill-posed system of the
form
\begin{equation}\label{sys0}
F_i(x) = y_i, \qquad i=0, \cdots, N-1
\end{equation}
consisting of $N$ equations, a Kaczmarz version of Landweber iteration in Banach spaces with general
uniformly convex penalty functions has been proposed in \cite{JW2013} which cyclically considers each equation
in (\ref{sys0}) in a Gauss-Seidel manner.  For these modern versions of Landweber method, each iteration step
essentially requires the computation of next iterate $\xi_+, x_+$ from current iterate $\xi_c, x_c$ via
\begin{align*}
\left\{\begin{array}{lll}
\xi_+ = \xi_c - t F'(x_c)^* J (F(x_c)-y),\\[0.8ex]
x_+ = \arg \displaystyle{\min_x\left\{ \Theta(x) - \l \xi_+, x\r \right\}},
\end{array}\right.
\end{align*}
where $t>0$ is a step size, $J$ is a duality mapping, $F, y$ denote one of $F_i, y_i$, $F'(x)$ denotes the Fr\'{e}chet derivative
of $F$, and $\Theta$ is a uniformly convex function. Therefore, the implementation of the Landweber(-Kaczmarz) method
in Banach spaces requires to solving a minimization problem of the form
\begin{equation}\label{min0}
x = \arg \displaystyle{\min_z\left\{ \Theta(z) - \l \xi, z\r \right\}}
\end{equation}
associated with $\Theta$ in each iteration step.

The existing convergence theory on Landweber(-Kaczmarz) method in Banach spaces requires the exact resolution
of the minimization problem (\ref{min0}). For some special $\Theta$ its exact resolution is possible.
However, this minimization problem in general can only be solved inexactly by an iterative procedure which
may produce an approximate solution $\bar x$ satisfying
\begin{equation}\label{min1}
\Theta(\bar x) - \l \xi, \bar x\r \le \arg \displaystyle{\min_z\left\{ \Theta(z) - \l \xi, z\r \right\}} +\ep
\end{equation}
for some small $\ep>0$. Furthermore, numerical simulations indicate that solving (\ref{min0}) very accurately
in every step does not improve the final reconstruction result but wastes a huge amount of computational time. Therefore,
it is necessary to formulate a Landweber-Kaczmarz method with inexact inner solver in each iteration step
and to develop the corresponding convergence theory. The inequality (\ref{min1}) suggests that the $\ep$-subdifferential
calculus might be a useful tool for this purpose.

It is well-known that Landweber iteration admits the slow convergence property (\cite{EHN96}) which restricts
its applications to wide range of problems. To expand its applied range, it is necessary to introduce
some acceleration strategy to fasten its convergence speed.  In this paper we will use the Nesterov's
strategy in optimization (\cite{Nest1983}) to propose an accelerated version of Landweber-Kaczmarz method in Banach spaces
in which some extrapolation steps are incorporated. We do not have a theory to guarantee its acceleration effect
at this moment, however, we will provide numerical simulations to support the fast convergence property.

This paper is organized as follows. In section 2 we will provide some preliminaries
on Banach spaces and convex analysis and derive some useful results concerning
$\ep$-subdifferential. In section 3 we propose the Landweber-Kaczmarz method
with inexact inner solvers, show that it is well-defined, and prove its convergence and
regularization property. We then use Nesterov's strategy to propose an accelerated version.
We also discuss how to produce the inexact solvers for solving the inner minimization problem
at each iteration step of Landweber-Kaczmarz method. Finally, we provide numerical simulations
to verify the theoretical results and to demonstrate the fast convergence of the accelerated
method.

\section{\bf Preliminaries}

Let $\X$ be two Banach space whose norm is denoted by $\|\cdot\|$. We use $\X^*$ 
to denote its dual spaces. For any $x\in \X$ and $\xi\in \X^*$, we write $\l \xi, x\r=\xi(x)$
for the duality pairing. Let $\Y$ be another Banach space. By ${\mathscr L}(\X, \Y)$ we denote for 
the space of all bounded linear operators from $\X$ to $\Y$.  For any $A \in {\mathscr L}(\X, \Y)$ we use
$A^*: \Y^*\to \X^*$ to denote its adjoint, i.e.
\begin{equation*}
\l A^*\zeta, x\r = \l \zeta, A x\r
\end{equation*}
for any $x\in \X$ and $\zeta \in \Y^*$. 

For each $1<s<\infty$, the set-valued mapping $J_s^{\X}: \X\to 2^{\X^*}$ defined by
\begin{equation*}
J_s^\X(x):=\left\{\xi\in \X^*: \|\xi\|=\|x\|^{s-1} \mbox{ and } \l \xi, x\r=\|x\|^s\right\}
\end{equation*}
is called the duality mapping of $\X$ with gauge function $t\to t^{s-1}$. When $\X$ is uniformly
smooth in the sense that its modulus of smoothness
\begin{equation*}
\rho_{\X}(t) := \sup\{\|\bar x+ x\|+\|\bar x-x\|- 2 : \|\bar x\| = 1,  \|x\|\le t\}
\end{equation*}
satisfies $\lim_{t\searrow 0} \frac{\rho_{\X}(t)}{t} =0$, the duality mapping $J_s^\X$, for each $1<s<\infty$,
is single valued and uniformly continuous on bounded sets.

Given a convex function $\Theta: \X \to (-\infty, \infty]$, we use
\begin{equation*}
\D(\Theta): =\{x\in \X: \Theta(x)<\infty\}
\end{equation*}
to denote its effective domain. It is called proper if $\D(\Theta)\ne \emptyset$. For a proper convex function
$\Theta: \X \to (-\infty, \infty]$ and $x\in \X$, we define for any $\ep\ge 0$ the set
\begin{equation*}
\p_\ep \Theta(x) :=\{ \xi \in \X^*: \Theta(\bar x) \ge \Theta(x) +\l \xi, \bar x- x\r -\ep \mbox{ for all } \bar x\in \X\}
\end{equation*}
which is called the $\ep$-subdifferential of $\Theta$ at $x$. Any element in $\p_\ep \Theta(x)$ is called
an $\ep$-subgradient of $\Theta$ at $x$. When $\ep=0$, the $\ep$-subdifferential of $\Theta$ reduces to
the subdifferential $\p \Theta$. It is clear that $\p_\ep \Theta(x) \ne \emptyset$ for some
$\ep\ge 0$ implies $x\in \D(\Theta)$. If $\Theta$ is lower semi-continuous, then for any $x\in \D(\Theta)$,
the $\ep$-subdifferential $\p_\ep \Theta(x)$ is always non-empty for any $\ep>0$, see \cite[Theorem 2.4.4]{Z2002};
however, $\p \Theta(x)$ can be empty in general.

For $\xi\in \p_\ep \Theta(x)$ with $\ep\ge 0$, we may introduce
\begin{equation*}
D_{\xi}^\ep \Theta (\bar x, x) := \Theta(\bar x) -\Theta(x) -\l \xi, \bar x-x\r +\ep, \quad \forall \bar x\in \X
\end{equation*}
which is called the $\ep$-Bregman distance induced by $\Theta$ at $x$ in the direction $\xi$. It is clear that
\begin{equation*}
D_\xi^\ep \Theta(\bar x, x) \ge 0.
\end{equation*}
When $\ep=0$, the $\ep$-Bregman distance becomes the well-known Bregman
distance \cite{Br1967} which will be denoted by $D_\xi \Theta(\bar x, x)$. It should be pointed out that $\ep$-Bregman
distance is not a metric distance in general. Nevertheless, as the following result shows,
the $\ep$-Bregman distance can be used to detect information under the norm if $\Theta$ is $p$-convex for
some $p\ge 2$ in the sense that  there is a constant $c_0>0$ such that
\begin{equation}\label{p-conv}
\Theta(t \bar x +(1-t) x)+c_0 t(1-t) \|\bar x-x\|^p \le t \Theta(\bar x) +(1-t) \Theta(x)
\end{equation}
for all $0\le t\le 1$ and $\bar x, x\in \X$, .

\begin{lemma}\label{lem2}
Let $\Theta: \X \to (-\infty, \infty]$ be a proper, lower semi-continuous function that is $p$-convex in the
sense of (\ref{p-conv}). If $\xi\in \p_\ep \Theta(x)$ for some $\ep\ge 0$, then
\begin{equation}\label{1.15}
c_0 \|\bar x-x\|^p \le 2 D_\xi^\ep \Theta(\bar x, x) + 2\ep.
\end{equation}
for any $\bar x \in \X$.
\end{lemma}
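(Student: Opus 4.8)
The plan is to combine the $p$-convexity inequality (\ref{p-conv}) with the defining inequality of the $\ep$-subdifferential, both evaluated at a well-chosen interior point on the segment joining $x$ and $\bar x$. The right choice is the midpoint, i.e. $t=1/2$, since the factor $t(1-t)$ in (\ref{p-conv}) is then maximized and produces exactly the constants appearing in (\ref{1.15}).

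First I would specialize (\ref{p-conv}) to $t=1/2$, obtaining
\[
\Theta\!\left(\frac{\bar x+x}{2}\right) + \frac{c_0}{4}\,\|\bar x-x\|^p \le \frac12 \Theta(\bar x) + \frac12 \Theta(x),
\]
which rearranges to bound $\frac{c_0}{4}\|\bar x-x\|^p$ by the convexity gap $\frac12\Theta(\bar x)+\frac12\Theta(x)-\Theta(\frac{\bar x+x}{2})$. Next, since $\xi\in\p_\ep\Theta(x)$, I would apply the $\ep$-subgradient inequality at the point $\frac{\bar x+x}{2}$; using $\frac{\bar x+x}{2}-x=\frac12(\bar x-x)$ this reads
\[
\Theta\!\left(\frac{\bar x+x}{2}\right) \ge \Theta(x) + \frac12\l\xi,\bar x-x\r - \ep.
\]
Substituting this lower bound (equivalently, the corresponding upper bound on $-\Theta(\frac{\bar x+x}{2})$) into the previous estimate cancels the value of $\Theta$ at the midpoint and leaves a bound involving only $\Theta(\bar x)-\Theta(x)-\l\xi,\bar x-x\r$, which equals $D_\xi^\ep\Theta(\bar x,x)-\ep$. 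Collecting terms then gives $\frac{c_0}{4}\|\bar x-x\|^p \le \frac12 D_\xi^\ep\Theta(\bar x,x) + \frac12\ep$, and multiplying through by $4$ yields (\ref{1.15}).

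I do not anticipate any genuine obstacle: once the midpoint is chosen, the argument is essentially a two-line algebraic combination, and it in fact uses only the $p$-convexity and the $\ep$-subgradient property, not the lower semicontinuity. The only point that needs a little care is the bookkeeping of the constant $\ep$: the $\ep$-subgradient inequality contributes a $-\ep$ while the definition of $D_\xi^\ep$ carries a $+\ep$, and one must check that these combine to leave $\frac12\ep$ on the right before the final multiplication by $4$. One could, if desired, keep $t\in(0,1)$ general and optimize over it, but $t=1/2$ already delivers the clean constants stated in the lemma.
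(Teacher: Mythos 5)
Your proof is correct and is essentially the paper's own argument: both combine the $p$-convexity inequality with the $\ep$-subgradient inequality evaluated at the convex combination of $x$ and $\bar x$, and then take $t=1/2$ (the paper merely keeps $t$ general until the last line). The constant bookkeeping in your final step, yielding $\tfrac{c_0}{4}\|\bar x-x\|^p\le\tfrac12 D_\xi^\ep\Theta(\bar x,x)+\tfrac12\ep$, checks out.
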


\begin{proof}
Since $\Theta$ is $p$-convex and $\xi\in \p_\ep \Theta(x)$, we have for any $0<t<1$ that
\begin{align*}
c_0 t (1-t) \|\bar x-x\|^p &\le t \Theta(x) +(1-t) \Theta(\bar x) - \Theta(t x +(1-t) \bar x)\\
& \le t \Theta(x) +(1-t) \Theta(\bar x) -\left[\Theta(x) +(1-t) \l \xi, \bar x-x\r -\ep\right]\\
& = (1-t) \left[ \Theta(\bar x)-\Theta(x) -\l \xi, \bar x-x\r +\ep\right]+ t\ep \\
& = (1-t) D_\xi^\ep \Theta(\bar x, x) +t\ep.
\end{align*}
By taking $t =1/2$ we then obtain (\ref{1.15}).
\end{proof}

In convex analysis, the Legendre-Fenchel conjugate is an important notion. Given a proper convex function
$\Theta: \X \to (-\infty, \infty]$, its Legendre-Fenchel conjugate is defined by
\begin{equation*}
\Theta^*(\xi) := \sup_{x\in \X} \left\{ \l \xi, x\r - \Theta(x)\right\}, \qquad \forall \xi\in \X^*.
\end{equation*}
As an immediate consequence of the definition, one can see, for any $\ep\ge 0$, that
\begin{equation}\label{4.10.2014}
\xi\in \p_\ep \Theta(x) \Longleftrightarrow \Theta(x) + \Theta^*(\xi) \le \l \xi, x\r +\ep.
\end{equation}
If, in addition,  $\Theta$ is lower semi-continuous, then (\cite[Theorem 2.4.4]{Z2002})
\begin{equation}\label{4.11.2014}
\xi\in \p_\ep \Theta(x) \Longleftrightarrow x\in \p_\ep \Theta^*(\xi).
\end{equation}
For a proper, lower semi-continuous, $p$-convex function, even if it is non-smooth, its Legendre-Fenchel conjugate
can have enough regularity as the following result indicates.

\begin{lemma}\label{Fenchel}
Let $\X$ be a reflexive Banach space and let $\Theta:\X \to (-\infty, \infty]$ be a proper, lower semi-continuous 
function that is $p$-convex in the sense
of (\ref{p-conv}). Then $\D(\Theta^*)=\X^*$, $\Theta^*$ is Fr\'{e}chet differentiable, and its gradient
$\nabla \Theta^*: \X^*\to \X$ satisfies
\begin{equation}\label{10.13.4}
\|\nabla \Theta^*(\xi)-\nabla \Theta^*(\eta)\|\le \left(\frac{\|\xi-\eta\|}{2c_0}\right)^{\frac{1}{p-1}}
\end{equation}
which consequently implies 
\begin{equation}\label{Lip}
|\Theta^*(\eta)-\Theta^*(\xi)-\l\eta-\xi, \nabla \Theta^*(\xi)\r |\le \frac{1}{p^* (2c_0)^{p^*-1}} \|\xi-\eta\|^{p^*}
\end{equation}
for any $\xi, \eta\in \X^*$, where $p^*$ is the number conjugate to $p$, i.e. $1/p+1/p^*=1$.
\end{lemma}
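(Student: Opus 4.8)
The plan is to obtain $\Theta^*$ as the smooth ``dual'' of the $p$-convex function $\Theta$, exploiting the duality between $p$-convexity of $\Theta$ and H\"{o}lder-smoothness of $\Theta^*$. The backbone of the whole argument is a sharpened subgradient inequality: whenever $\zeta\in\p\Theta(x)$, inserting the convex combination $t x'+(1-t)x$ into (\ref{p-conv}), using $\Theta(tx'+(1-t)x)\ge\Theta(x)+t\l\zeta,x'-x\r$, dividing by $t$ and letting $t\searrow0$ yields
\begin{equation*}
\Theta(x')\ge\Theta(x)+\l\zeta,x'-x\r+c_0\|x'-x\|^p,\qquad\forall x'\in\X. \tag{$\star$}
\end{equation*}
Inequality $(\star)$ is strictly stronger than what Lemma~\ref{lem2} provides, and it is precisely the constant $c_0$ here (rather than $c_0/2$) that produces the factor $2c_0$ in (\ref{10.13.4}); recovering this sharp constant is the first point that requires care.

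First I would establish $\D(\Theta^*)=\X^*$. Fixing $x_0\in\D(\Theta)$ and an $\ep$-subgradient $\xi_0\in\p_\ep\Theta(x_0)$ (which exists for $\ep>0$ by lower semicontinuity of $\Theta$), Lemma~\ref{lem2} furnishes the coercive lower bound $\Theta(x)\ge\Theta(x_0)+\l\xi_0,x-x_0\r+\tfrac{c_0}{2}\|x-x_0\|^p-2\ep$. Since $p\ge2$, for each fixed $\zeta\in\X^*$ the functional $x\mapsto\Theta(x)-\l\zeta,x\r$ is proper, lower semicontinuous, strictly convex and coercive, so it is bounded below; hence $\Theta^*(\zeta)<\infty$ and $\D(\Theta^*)=\X^*$. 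Because $\X$ is reflexive, this coercive functional actually attains its infimum, and strict convexity makes the minimiser $x_\zeta$ unique; the Fenchel--Young equality (\ref{4.10.2014}) then gives $\zeta\in\p\Theta(x_\zeta)$ and, via (\ref{4.11.2014}), $x_\zeta\in\p\Theta^*(\zeta)$.

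Next I would identify $\nabla\Theta^*$ and prove (\ref{10.13.4}). Using (\ref{4.11.2014}), any $u\in\p\Theta^*(\zeta)$ satisfies $\zeta\in\p\Theta(u)$, forcing $u$ to maximise $\l\zeta,\cdot\r-\Theta$, whence $u=x_\zeta$; thus $\p\Theta^*(\zeta)=\{x_\zeta\}$ is a singleton. Since $\Theta^*$ is finite, convex and lower semicontinuous on the whole space, it is continuous on $\X^*$, so the singleton subdifferential makes $\Theta^*$ G\^{a}teaux differentiable with $\nabla\Theta^*(\zeta)=x_\zeta$. Applying $(\star)$ at $x_\zeta$ tested against $x_\eta$, and symmetrically at $x_\eta$ tested against $x_\zeta$, and adding, the $\Theta$-values cancel and I obtain $2c_0\|x_\zeta-x_\eta\|^p\le\l\zeta-\eta,x_\zeta-x_\eta\r\le\|\zeta-\eta\|\,\|x_\zeta-x_\eta\|$, which rearranges to $\|x_\zeta-x_\eta\|\le(\|\zeta-\eta\|/(2c_0))^{1/(p-1)}$, i.e. (\ref{10.13.4}).

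Finally, (\ref{Lip}) together with full Fr\'{e}chet differentiability follows from (\ref{10.13.4}) by the fundamental theorem of calculus along the segment $s\mapsto\zeta+s(\eta-\zeta)$, which is legitimate since the G\^{a}teaux gradient is H\"{o}lder continuous, hence continuous, by (\ref{10.13.4}). Writing $\Theta^*(\eta)-\Theta^*(\zeta)-\l\eta-\zeta,\nabla\Theta^*(\zeta)\r=\int_0^1\l\eta-\zeta,\nabla\Theta^*(\zeta+s(\eta-\zeta))-\nabla\Theta^*(\zeta)\r\,ds$ and bounding the integrand by (\ref{10.13.4}) produces the factor $\|\eta-\zeta\|^{1+1/(p-1)}=\|\eta-\zeta\|^{p^*}$ and the constant $\int_0^1(s/(2c_0))^{1/(p-1)}\,ds=\tfrac{1}{p^*(2c_0)^{p^*-1}}$, giving exactly (\ref{Lip}); since $p^*>1$ the right-hand side is $o(\|\eta-\zeta\|)$, which upgrades G\^{a}teaux to Fr\'{e}chet differentiability. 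The two steps I expect to demand the most attention are securing the sharp constant in $(\star)$, on which the final constant entirely depends, and justifying attainment and uniqueness of $x_\zeta$, where reflexivity and strict $p$-convexity are used in an essential way.
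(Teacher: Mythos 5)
Your argument is correct, but it cannot be compared against a proof in the paper in the usual sense: the paper disposes of this lemma with a one-line citation to Z\u{a}linescu (Theorem 3.5.10 and Corollary 3.5.11 of \cite{Z2002}), so you have supplied a complete self-contained proof where the paper has none. Your route is essentially the standard one behind that reference, and every step checks out: the sharpened subgradient inequality $(\star)$ with the full constant $c_0$ (obtained by dividing by $t$ and letting $t\searrow 0$, rather than fixing $t=1/2$ as in Lemma \ref{lem2}) is exactly what is needed, since adding the two instances of $(\star)$ at $x_\zeta$ and $x_\eta$ produces $2c_0\|x_\zeta-x_\eta\|^p\le\l\zeta-\eta,x_\zeta-x_\eta\r$ and hence the constant $2c_0$ in (\ref{10.13.4}); the coercivity bound from Lemma \ref{lem2} with an $\ep$-subgradient correctly gives $\D(\Theta^*)=\X^*$ and, with reflexivity and strict convexity, attainment and uniqueness of $x_\zeta$; the identification $\p\Theta^*(\zeta)=\{x_\zeta\}$ via (\ref{4.11.2014}) plus the continuity of the everywhere-finite convex function $\Theta^*$ legitimately yields G\^{a}teaux differentiability; and the fundamental-theorem-of-calculus computation reproduces the constant $\frac{1}{p^*(2c_0)^{p^*-1}}$ exactly, with the $o(\|\eta-\zeta\|)$ remainder upgrading to Fr\'{e}chet differentiability. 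What your version buys is transparency about where each hypothesis enters (reflexivity for attainment, $c_0$ for the modulus, $p\ge 2$ only through $p>1$ for coercivity); what the citation buys is brevity and access to the sharper general statements in \cite{Z2002} relating uniform convexity of $\Theta$ to uniform smoothness of $\Theta^*$.
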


\begin{proof}
See \cite[Theorem 3.5.10 and Corollary 3.5.11]{Z2002}.
\end{proof}

Finally we conclude this section by providing a result which show that, when $\Theta$ is $p$-convex,
then, for any $x\in \p_\ep \Theta^*(\xi)$, the distance from $x$ to $\nabla \Theta^*(\xi)$ can be
controlled in terms of $\ep$.

\begin{lemma}\label{lem3}
Let $\X$ ba a reflexive Banach space and let $\Theta: \X \to (-\infty, \infty]$ be a proper, lower semi-continuous 
function that is $p$-convex in
the sense of (\ref{p-conv}). If $x\in \X$ and $\xi\in \X^*$ satisfy $\xi\in \p_\ep \Theta(x)$ for some $\ep\ge 0$, 
then for any $\eta\in \X^*$ there holds
\begin{equation}\label{eq37}
\l \eta, x-\nabla \Theta^*(\xi)\r \le \ep + \frac{1}{p^*(2c_0)^{p^*-1}} \|\eta\|^{p^*}
\end{equation}
and hence 
\begin{equation}\label{eq38}
\|x-\nabla \Theta^*(\xi)\|^p\le \frac{p}{2c_0}\ep.
\end{equation}
\end{lemma}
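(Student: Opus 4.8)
The plan is to transfer the hypothesis from $\Theta$ to its conjugate $\Theta^*$ and then exploit the regularity of $\Theta^*$ furnished by Lemma \ref{Fenchel}. First I would invoke the equivalence (\ref{4.11.2014}): since $\Theta$ is proper, lower semi-continuous and $\xi \in \p_\ep \Theta(x)$, we obtain $x \in \p_\ep \Theta^*(\xi)$. By reflexivity we identify $\X^{**}$ with $\X$, so the defining inequality of the $\ep$-subdifferential of $\Theta^*$ at $\xi$ reads
\[
\Theta^*(\bar\xi) \ge \Theta^*(\xi) + \l \bar\xi - \xi, x\r - \ep \qquad \mbox{for all } \bar\xi \in \X^*.
\]
Choosing $\bar\xi = \xi + \eta$ with an arbitrary $\eta \in \X^*$ and rearranging yields the basic estimate
\[
\l \eta, x\r \le \Theta^*(\xi + \eta) - \Theta^*(\xi) + \ep.
\]

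Next I would bound the increment $\Theta^*(\xi+\eta) - \Theta^*(\xi)$ from above using the second-order estimate (\ref{Lip}) of Lemma \ref{Fenchel} (which also guarantees that $\nabla\Theta^*(\xi)$ is well defined). Taking the two points there to be $\xi$ and $\xi + \eta$ gives
\[
\Theta^*(\xi + \eta) - \Theta^*(\xi) \le \l \eta, \nabla \Theta^*(\xi)\r + \frac{1}{p^*(2c_0)^{p^*-1}} \|\eta\|^{p^*}.
\]
Inserting this into the previous display and moving $\l \eta, \nabla\Theta^*(\xi)\r$ to the left-hand side produces exactly (\ref{eq37}).

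Finally, to derive (\ref{eq38}) from (\ref{eq37}), I would set $w := x - \nabla \Theta^*(\xi)$ and, for each fixed $r>0$, take the supremum in (\ref{eq37}) over all $\eta$ with $\|\eta\| = r$. Since $\sup_{\|\eta\|=r} \l \eta, w\r = r\|w\|$ by reflexivity and the definition of the dual norm, this reduces the problem to the scalar inequality
\[
r \|w\| \le \ep + \frac{1}{p^*(2c_0)^{p^*-1}} r^{p^*}, \qquad r > 0.
\]
It then remains to maximize $f(r) := r\|w\| - \frac{1}{p^*(2c_0)^{p^*-1}} r^{p^*}$ over $r > 0$. Using the conjugacy relations, the stationarity condition $f'(r)=0$ gives the optimal radius $r = 2c_0 \|w\|^{p-1}$, at which $f$ attains the value $\frac{2c_0}{p}\|w\|^p$; since $f(r)\le \ep$ for every $r$, the bound $\frac{2c_0}{p}\|w\|^p \le \ep$ rearranges to (\ref{eq38}).

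All steps are elementary; the only place demanding care is this closing one-variable optimization together with the bookkeeping of the conjugate exponents, in particular the identities $(p-1)p^* = p$ and $1 - 1/p^* = 1/p$, where an arithmetic slip would most easily occur.
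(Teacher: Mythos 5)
Your proof is correct and follows essentially the same route as the paper: both pass to $x\in\p_\ep\Theta^*(\xi)$ via (\ref{4.11.2014}), combine the resulting inequality with the estimate (\ref{Lip}) from Lemma \ref{Fenchel} to get (\ref{eq37}), and then extract (\ref{eq38}) from (\ref{eq37}). The only cosmetic difference is in the last step, where the paper plugs in a specific test functional $\eta\in 2c_0 J_p^{\X}(x-\nabla\Theta^*(\xi))$ while you take the supremum over spheres and optimize the radius --- the optimal radius you find is exactly the norm of the paper's choice, so the two computations coincide.
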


\begin{proof}
Since $\xi\in \p_\ep \Theta(x)$, by (\ref{4.11.2014}) we have $x\in \p_\ep \Theta^*(\xi)$ and hence
\begin{equation*}
\Theta^*(\xi+\eta) \ge \Theta^*(\xi) + \l \eta, x\r -\ep.
\end{equation*}
By using (\ref{Lip}) in Lemma \ref{Fenchel}, we also have
\begin{equation*}
\Theta^*(\xi+\eta) \le \Theta^*(\xi) + \l \eta, \nabla \Theta^*(\xi)\r + \frac{1}{p^*(2c_0)^{p^*-1}} \|\eta\|^{p^*}.
\end{equation*}
Combining the above two inequalities we obtain
\begin{equation*}
 \l \eta, x\r -\ep \le \l \eta, \nabla \Theta^*(\xi)\r + \frac{1}{p^*(2c_0)^{p^*-1}} \|\eta\|^{p^*}
\end{equation*}
which shows (\ref{eq37}). By taking $\eta \in 2 c_0 J_p^{\X} (x-\nabla \Theta^*(\xi))$ in (\ref{eq37})
and using the properties of $J_p^{\X}$, we then obtain
\begin{equation*}
2 c_0 \|x-\nabla \Theta^*(\xi)\|^p \le \ep + \frac{2 c_0}{p^*} \|x-\nabla \Theta^*(\xi)\|^p
\end{equation*}
which shows (\ref{eq38}).
\end{proof}

\section{\bf The method}

We consider the system
\begin{equation}\label{sys}
F_i(x) = y_i, \qquad i=0, \cdots, N-1
\end{equation}
consisting of $N$ equations,  where, for each $i=0, \cdots, N-1$, $F_i: \D(F_i) \subset \X\to \Y_i$
is an operator between two reflexive Banach spaces $\X$ and $\Y_i$. Such systems arise in many practical applications
including various tomography problems using multiple exterior measurements.

We will assume that (\ref{sys}) has a solution which consequently implies that
\begin{equation*}
\D : = \bigcap_{i=0}^{N-1} \D(F_i) \ne \emptyset.
\end{equation*}
In practical applications, instead of $y_i$ we only have noisy data $y_i^\d$ satisfying
\begin{equation}\label{8.10.2}
\|y_i^\d -y_i\| \le \d, \qquad i=0, \cdots, N-1
\end{equation}
with a small noise level $\d>0$. How to use $y_i^\d$ to produce an approximate solution of (\ref{sys})
is an important question. In \cite{JW2013} we proposed a Landweber iteration of Kaczmarz type which makes
use of every equation in (\ref{sys}) cyclically.  In order to capture the feature of
the sought solution, general convex functions $\Theta: \X \to (-\infty, \infty]$ have been used in
\cite{JW2013} as penalty terms.

We will make the following assumption, where $B_\rho(x_0) :=\{x\in \X: \|x-x_0\|\le \rho\}$.

\begin{Assumption}\label{A1}
{\it 
\begin{enumerate}
\item[(a)] $\Theta: \X\to (-\infty, \infty]$ is proper, lower semi-continuous and $p$-convex
in the sense of (\ref{p-conv}).

\item[(b)] There exist $\rho > 0$, $x_0\in \X$ and $\xi_0\in \p \Theta(x_0)$ such that
$B_{2\rho}(x_0)\subset \D$ and (\ref{sys}) has a solution $x^\dag \in \D(\Theta)$ with
\begin{equation}\label{12.5}
D_{\xi_0} \Theta(x^\dag, x_0) \le \frac{1}{4} c_0 \rho^p;
\end{equation}

\item[(c)] For each $i=0, \cdots, N-1$ there exists $\{L_i(x): \X\to \Y_i\}_{x\in B_{2\rho}(x_0)} \subset {\mathscr L}(\X, \Y_i)$
such that $x\to L_i(x)$ is continuous on $B_{2\rho}(x_0)$ and there is $0\le \gamma<1$ such that
\begin{equation*}
\|F_i(\bar x)-F_i(x) -L_i(x) (\bar x-x)\|\le \gamma \|F_i(\bar x) -F_i(x)\|
\end{equation*}
for all $\bar x, x \in B_{2\rho}(x_0)$.
\end{enumerate}
}
\end{Assumption}

According to Assumption \ref{A1} (c), we can find a constant $B>0$ such that
\begin{equation}\label{eq:L}
\|L_i(x) \|\le B \qquad \forall x\in B_{2\rho}(x_0) \mbox{ and } i=0, \cdots, N-1.
\end{equation}
Moreover
\begin{equation}\label{12.6.1}
\|F_i(\bar x) -F_i(x) \| \le \frac{1}{1-\gamma} \|L_i(x) (\bar x-x)\| \le \frac{B}{1-\gamma}\|\bar x-x\|
\end{equation}
for all $\bar x, x \in B_{2\rho}(x_0)$ which shows that $F_i$ is continuous on $B_{2\rho}(x_0)$ for
each $i=0, \cdots, N-1$.

The formulation of the Landweber iteration of Kaczmarz type in \cite{JW2013} involves in each
iteration step a minimization problem of the form
\begin{equation}\label{min}
x: =\arg \min_{z\in \X} \left\{ \Theta(z) - \l \xi, z\r\right\}
\end{equation}
for any $\xi\in \X^*$. The convergence result developed there requires to solving (\ref{min}) exactly.
The exact solution of (\ref{min}) can be found for some special $\Theta$. However, this minimization
problem in general can only be solved inexactly by iterative procedures. Furthermore, numerical simulations
indicate that solving (\ref{min}) very accurately in every step does not improve the final reconstruction result.
Therefore, it is necessary to formulate a Landweber-Kaczmarz method with inexact inner solver
in each iteration step and to develop the corresponding convergence theory.

Concerning the inexact resolution of (\ref{min}), we make the following assumption.

\begin{Assumption}\label{A2}
{\it For any given $\ep>0$ there is a procedure $S_\ep:\X^*\to \X$ for solving (\ref{min}) such that for any $\xi\in \X^*$, the element
$x := S_\ep (\xi)$ satisfies
\begin{equation}\label{min_inexact}
\Theta(x) -\l\xi, x\r \le \min_{z\in \X} \left\{\Theta(z)-\l \xi, z\r\right\}+\ep.
\end{equation}
Moreover, for each $\ep>0$, the mapping $S_\ep: \X^*\to \X$ is continuous.
}
\end{Assumption}

In subsection 3.5 we will discuss how to produce the inexact procedure $S_\ep$ by using concrete
examples of $\Theta$ including the total variation like convex penalty functions.

\subsection{The method with noisy data}

We are ready to formulate our Landweber-Kaczmarz method with inexact inner solvers.
We will take $1<s<\infty$ and let $J_s^{\Y_i}$ denote the duality mapping over $\Y_i$ with gauge
function $t\to t^{s-1}$. Given an integer $n$, we set $i_n := n \, (\mbox{mod } N)$.

\begin{algorithm}[Landweber-Kaczmarz method with noisy data] \label{alg1} \quad

\noindent
{\it Let $\beta_0>0$, $\beta_1>0$ and $\tau>1$ be suitably chosen numbers,
and let $\{\ep_n\}_{n\ge 0}$ be a sequence of positive numbers satisfying $\sum_{n=0}^\infty \ep_n <\infty$.

\begin{enumerate}

\item[(i)] Pick $x_0\in \X$ and $\xi_0\in \X^*$ such that $\xi_0 \in \p \Theta(x_0)$. 

\item[(ii)] Let $\xi_0^\d:=\xi_0$ and $x_0^\d:=x_0$. Let $q_{-1}=0$ and let $\sigma>0$ be a small number.  
For $n\ge 0$ we define
$
r_n^\d = F_{i_n}(x_n^\d) -y_{i_n}^\d
$
and 
\begin{equation*}
q_n  = \left\{\begin{array}{lll}
q_{n-1}+1 \ & \mbox{ if } \|r_n^\d\|^p + \sigma \ep_n \le (\tau\d)^p,\\[0.8ex]
0 & \mbox{ otherwise.}
\end{array}\right.
\end{equation*}
We then update 
\begin{equation}\label{6.27.0}
\left\{\begin{array}{lll}
\xi_{n+1}^\d  =\xi_n^\d - \mu_n^\d L_{i_n}(x_n^\d)^* J_s^{\Y_{i_n}} (r_n^\d),\\[1.2ex]
x_{n+1}^\d  = S_{\ep_{n+1}}(\xi_{n+1}^\d),
\end{array}\right.
\end{equation}
where
\begin{equation}\label{6.27.5}
\mu_n^\d  = \left\{\begin{array}{lll}
\tilde{\mu}_n^\d \left(\|r_n^\d\|^p + \sigma\ep_n\right)^{1-\frac{s}{p}} & \mbox{ if } 
\|r_n^\d\|^p + \sigma \ep_n>(\tau\d)^p,\\[0.8ex]
0 & \mbox{ otherwise}
\end{array}\right.
\end{equation}
with
\begin{equation*}
\tilde{\mu}_n^\d = \min\left\{\frac{\beta_0 \|r_n^\d\|^{p(s-1)}}
{\|L_{i_n}(x_n^\d)^* J_s^{\Y_{i_n}}(r_n^\d)\|^p}, \beta_1\right\}.
\end{equation*}

\item[(iii)] Let $n_\d$ be the first integer such that $ q_{n_\d} =N$ and use $x_{n_\d}^\d$ as an approximate solution.
\end{enumerate}
}
\end{algorithm}

In Algorithm \ref{alg1}, each $\xi_n^\d$ is determined by $F_{i_n}$ completely
without involving $\Theta$, and each $x_n^\d$ is defined by the inexact procedure specified in Assumption
\ref{A2} for solving (\ref{min}) which is independent of $F_i$, $i=0, \cdots, N-1$. This splitting character can make the
implementation of Algorithm \ref{alg1} efficiently. Furthermore, the definition of $x_n^\d$ implies that
\begin{equation*}
\Theta(x_n^\d) -\l \xi_n^\d, x_n^\d\r \le \Theta(x)-\l \xi_n^\d, x\r +\ep_n, \quad \forall x\in \X
\end{equation*}
which shows that
\begin{equation}\label{2014.9.25.1}
\xi_n^\d \in \p_{\ep_n}\Theta(x_n^\d).
\end{equation}
We will use this fact in the forthcoming convergence analysis.

We first prove the following basic result which shows that Algorithm \ref{alg1} is well-defined.

\begin{lemma}\label{lem7.26}
Let $\X$ and $\Y_i$ be reflexive Banach spaces with $\Y_i$ being uniformly smooth, let $\Theta$ and $F_i$, $i=0, \cdots, N-1$
satisfy Assumption \ref{A1}, and let $\{\ep_n\}_{n\ge 0}$ be a sequence of positive numbers satisfying
\begin{equation}\label{2014.9.24.2}
16 \sum_{n=0}^\infty \ep_n \le c_0 \rho^p.
\end{equation}
Let $\beta>1$ be a constant such that $\beta \gamma <1$. If $\beta_0>0$ and $\tau>1$ are
chosen such that
\begin{equation}\label{12.2.1}
c_1:=\frac{1}{\beta} -\gamma -\frac{1+\gamma}{\tau} -\frac{2}{p^*} \left(\frac{\beta_0}{2 c_0}\right)^{p^*-1} >0
\end{equation}
and if $\sigma>0$ is chosen such that $\kappa \beta_1 \sigma\le 1$, where 
\begin{equation}\label{eq:16.1.14}
\kappa = \left\{\begin{array}{lll}
1  & \mbox{ if } p\ge s\\
(\beta^{\frac{p}{s-p}}-1)^{\frac{p-s}{p}} & \mbox{ if } p<s,
\end{array}\right.
\end{equation}
then for Algorithm \ref{alg1} there hold

\begin{enumerate}

\item[(i)] $x_n^\d \in B_{2\rho}(x_0)\cap \D(\Theta)$ for all $n =0, 1, \cdots$;

\item[(ii)] the method terminates after $n_\d<\infty$ iteration steps;

\item[(iii)] for any solution $\hat x$ of (\ref{sys}) in $B_{2\rho}(x_0) \cap \D(\Theta)$ there holds
\begin{equation}\label{eq:11.10}
D_{\xi_{n+1}^\d}^{\ep_{n+1}} \Theta(\hat x, x_{n+1}^\d)-D_{\xi_n^\d}^{\ep_{n}} \Theta (\hat x, x_n^\d)
\le 2 \ep_{n} + \ep_{n+1}
\end{equation}
for all $n\ge 0$. Here we may take $\ep_0=0$ because $\xi_0\in \p \Theta(x_0)$. 
\end{enumerate}
\end{lemma}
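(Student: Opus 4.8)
The three assertions are interlocked, so the plan is to establish the one-step inequality (iii) first --- it is the engine from which (i) follows by induction and (ii) by a summation argument --- and essentially all the work is concentrated in (iii).

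For (iii) I would pass to the Legendre--Fenchel conjugate. Since $\xi_n^\d\in\p_{\ep_n}\Theta(x_n^\d)$ by (\ref{2014.9.25.1}), (\ref{4.10.2014}) gives $\l\xi_n^\d,x_n^\d\r-\Theta(x_n^\d)=\Theta^*(\xi_n^\d)-\omega_n$ with $0\le\omega_n\le\ep_n$, whence $D_{\xi_n^\d}^{\ep_n}\Theta(\hat x,x_n^\d)=\Theta(\hat x)-\l\xi_n^\d,\hat x\r+\Theta^*(\xi_n^\d)+(\ep_n-\omega_n)$. Writing $\eta_n:=\xi_{n+1}^\d-\xi_n^\d=-\mu_n^\d L_{i_n}(x_n^\d)^*J_s^{\Y_{i_n}}(r_n^\d)$ and subtracting consecutive values, the $\Theta(\hat x)$ cancels and the left side of (\ref{eq:11.10}) is $\le -\l\eta_n,\hat x\r+[\Theta^*(\xi_{n+1}^\d)-\Theta^*(\xi_n^\d)]+\ep_{n+1}$, the inexactness defect having been absorbed into $\ep_{n+1}$. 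To the conjugate difference I apply the second-order estimate (\ref{Lip}) of Lemma \ref{Fenchel}, then replace $\nabla\Theta^*(\xi_n^\d)$ by $x_n^\d$ via (\ref{eq37}) of Lemma \ref{lem3} (with $\eta=-\eta_n$), at the cost of one extra $\ep_n$ and a second copy of the remainder $\tfrac{1}{p^*(2c_0)^{p^*-1}}\|\eta_n\|^{p^*}$. The surviving first-order term $\l\eta_n,x_n^\d-\hat x\r$ is controlled by the tangential cone condition of Assumption \ref{A1}(c) at $x_n^\d$ (legitimate once $x_n^\d\in B_{2\rho}(x_0)$) together with $r_n^\d=F_{i_n}(x_n^\d)-y_{i_n}^\d$, $F_{i_n}(\hat x)=y_{i_n}$, $\|y_{i_n}^\d-y_{i_n}\|\le\d$ and $\l J_s^{\Y_{i_n}}(r_n^\d),r_n^\d\r=\|r_n^\d\|^s$, producing the main term $\mu_n^\d\|r_n^\d\|^{s-1}\big[(1+\gamma)\d-(1-\gamma)\|r_n^\d\|\big]$. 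Thus (iii) reduces to showing $\mu_n^\d\|r_n^\d\|^{s-1}[(1+\gamma)\d-(1-\gamma)\|r_n^\d\|]+\tfrac{2}{p^*(2c_0)^{p^*-1}}\|\eta_n\|^{p^*}\le\ep_n$ in the active case (and $\le0$ trivially when $\mu_n^\d=0$).

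This last inequality is where the shape of the step size (\ref{6.27.5}) is designed, and it is the crux of the whole lemma. Setting $w_n:=(\|r_n^\d\|^p+\sigma\ep_n)^{1/p}$ and $\theta:=\|r_n^\d\|/w_n\in(0,1]$, the defining bound on $\tilde\mu_n^\d$ gives $\|\eta_n\|^{p^*}\le\tilde\mu_n^\d\beta_0^{p^*-1}w_n^p\theta^{(s-1)p^*}$, so the remainder carries exactly the factor $\tfrac{2}{p^*}(\beta_0/2c_0)^{p^*-1}$ subtracted in (\ref{12.2.1}). Factoring out $\tilde\mu_n^\d w_n^p\theta^{s-1}\ge0$ and using $\d<w_n/\tau$ in the active case, the left side is dominated by $\tilde\mu_n^\d w_n^p\theta^{s-1}$ times the bracket $\tfrac{1+\gamma}{\tau}-(1-\gamma)\theta+\tfrac{2}{p^*}(\beta_0/2c_0)^{p^*-1}\theta^{(s-1)(p^*-1)}$. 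I then split on $\theta$: if $\theta\ge1/\beta$, then $\theta^{(s-1)(p^*-1)}\le1$ and positivity of $c_1$ in (\ref{12.2.1}) force the bracket below $-c_1<0$, a strict decrease; if $\theta<1/\beta$, I substitute $w_n^p=\sigma\ep_n/(1-\theta^p)$, bound $\tilde\mu_n^\d\le\beta_1$, and use the smallness condition $\kappa\beta_1\sigma\le1$ with $\kappa$ as in (\ref{eq:16.1.14}) to absorb the residual positive contribution into $\ep_n$. Either way the target $\le\ep_n$ holds, completing (iii). The delicate point --- the main obstacle --- is precisely this simultaneous calibration: the step size must make the $p^*$-remainder coincide with the $c_1$-term, while $\sigma$ (quantified through $\kappa$, with its separate $p\ge s$ and $p<s$ branches) must be small enough that the small-residual regime stays within the allotted $\ep_n$.

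With (iii) available, (i) follows by strong induction: $x_0\in B_{2\rho}(x_0)$ is trivial, and (\ref{12.5}) with Lemma \ref{lem2} places $x^\dag\in B_\rho(x_0)$; assuming $x_k^\d\in B_{2\rho}(x_0)$ for $k\le n$, I sum (\ref{eq:11.10}) with $\hat x=x^\dag$ over $k=0,\dots,n$, bound the telescoped right side by $\tfrac14c_0\rho^p+3\sum_k\ep_k\le\tfrac{7}{16}c_0\rho^p$ via (\ref{2014.9.24.2}), and apply Lemma \ref{lem2} once more to obtain $\|x^\dag-x_{n+1}^\d\|\le\rho$, hence $x_{n+1}^\d\in B_{2\rho}(x_0)$; membership in $\D(\Theta)$ is automatic from (\ref{2014.9.25.1}). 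Finally, for (ii), $\sum_n\ep_n<\infty$ gives $\ep_n\to0$, so eventually $\sigma\ep_n<(1-\beta^{-p})(\tau\d)^p$; this forces every later active step into the regime $\theta\ge1/\beta$, where $\|r_n^\d\|\ge\tau\d/\beta$ and, using $\|L_{i_n}(x_n^\d)\|\le B$ from (\ref{eq:L}) and $\|J_s^{\Y_{i_n}}(r_n^\d)\|=\|r_n^\d\|^{s-1}$, one has $\tilde\mu_n^\d\ge\min\{\beta_0/B^p,\beta_1\}>0$. Consequently each such active step drives $D^\ep$ down by a fixed positive amount beyond the summable $\ep$-terms; since $D^\ep\ge0$, only finitely many active steps can occur, and once they stop $q_n$ climbs to $N$, so $n_\d<\infty$.
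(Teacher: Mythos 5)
Your proposal is correct and follows essentially the same route as the paper: the one-step estimate via the Fenchel conjugate, Lemma \ref{Fenchel}, Lemma \ref{lem3} and the tangential cone condition, then induction for (i) and a summability/contradiction argument for (ii). Your case split on $\theta=\|r_n^\d\|/(\|r_n^\d\|^p+\sigma\ep_n)^{1/p}$ is just a reparametrization of the paper's inequality $\mu_n^\d\|r_n^\d\|^s\ge\frac{1}{\beta}\mu_n^\d(\|r_n^\d\|^p+\sigma\ep_n)^{s/p}-\kappa\tilde\mu_n^\d\sigma\ep_n$ (your small-$\theta$ branch reduces to $\theta^{s-1}G(\theta)\le\kappa(1-\theta^p)$, which for $p<s$ needs the same elementary inequality $(a+b)^t\le\beta a^t+\beta(\beta^{1/(t-1)}-1)^{1-t}b^t$ the paper invokes), and the calibration of $\kappa$, $\sigma$ and $c_1$ works out as you claim.
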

\begin{proof}
Let $\hat x$ be any solution of (\ref{sys}) in $B_{2\rho}(x_0)\cap \D(\Theta)$.  We first show that,
if $x_n^\d\in B_{2\rho}(x_0)\cap \D(\Theta)$ for some $n\ge 0$, then
\begin{equation}\label{5.30.1}
D_{\xi_{n+1}^\d}^{\ep_{n+1}}\Theta (\hat x, x_{n+1}^\d)
-D_{\xi_n^\d}^{\ep_n} \Theta(\hat x, x_n^\d) 
\le -c_1 \mu_n^\d \left(\|r_n^\d\|^p+\sigma \ep_n\right)^{\frac{s}{p}} + 2\ep_n +\ep_{n+1}.
\end{equation}
To see this, we consider
\begin{equation*}
\Delta_n:= D_{\xi_{n+1}^\d}^{\ep_{n+1}}\Theta (\hat x, x_{n+1}^\d)
-D_{\xi_n^\d}^{\ep_n} \Theta(\hat x, x_n^\d)
\end{equation*}
which can be written as
\begin{align*}
\Delta_n &= \left[\Theta(x_n^\d)-\l \xi_n^\d, x_n^\d\r -\ep_n\right]
+\left[\l \xi_{n+1}^\d, x_{n+1}^\d\r -\Theta(x_{n+1}^\d)\right]\\
& \quad \, - \l \xi_{n+1}^\d-\xi_n^\d, \hat x\r +\ep_{n+1}.
\end{align*}
Since $\xi_n^\d \in \p_{\ep_n}\Theta(x_n^\d)$, we have from (\ref{4.10.2014}) that
\begin{equation*}
\Theta(x_n^\d)-\l \xi_n^\d, x_n^\d\r -\ep_n \le - \Theta^*(\xi_n^\d).
\end{equation*}
By the definition of $\Theta^*$ we also have
\begin{equation*}
\l \xi_{n+1}^\d, x_{n+1}^\d\r -\Theta(x_{n+1}^\d) \le \Theta^*(\xi_{n+1}^\d).
\end{equation*}
Therefore
\begin{align*}
\Delta_n &\le  \Theta^*(\xi_{n+1}^\d) -\Theta^*(\xi_n^\d)-\l \xi_{n+1}^\d-\xi_n^\d, \hat x\r +\ep_{n+1}\\
&=  \Theta^*(\xi_{n+1}^\d) -\Theta^*(\xi_n^\d)-\l \xi_{n+1}^\d-\xi_n^\d, \nabla \Theta^*(\xi_n^\d)\r \\
& \quad \, + \l \xi_{n+1}^\d-\xi_n^\d, \nabla \Theta^*(\xi_n^\d)-x_n^\d\r \\
& \quad \, +\l \xi_{n+1}^\d-\xi_n^\d, x_n^\d-\hat{x}\r+\ep_{n+1}.
\end{align*}
Because $\Theta$ is $p$-convex, we may use (\ref{Lip}) in Lemma \ref{Fenchel} and the definition of $\xi_{n+1}^\d$ to obtain
\begin{align}\label{2014.9.24}
\Delta_n  & \le \frac{1}{p^*(2 c_0)^{p^*-1}}\|\xi_{n+1}^\d -\xi_n^\d\|^{p^*}
+ \l \xi_{n+1}^\d-\xi_n^\d, \nabla \Theta^*(\xi_n^\d)-x_n^\d\r \nonumber\\
& \quad \, + \mu_n^\d \l J_s^{\Y_{i_n}}(r_n^\d), L_{i_n}(x_n^\d)(\hat x -x_n^\d)\r + \ep_{n+1}.
\end{align}
Since $\xi_n^\d \in \p_{\ep_n} \Theta(x_n^\d)$, we may use Lemma \ref{lem3} to derive that
\begin{equation*}
\l \xi_{n+1}^\d-\xi_n^\d, \nabla \Theta^*(\xi_n^\d)-x_n^\d\r
\le \ep_n + \frac{1}{p^*(2c_0)^{p^*-1}} \|\xi_{n+1}^\d-\xi_n^\d\|^{p^*}.
\end{equation*}
Plugging this estimate into (\ref{2014.9.24}) and using the definition of $\xi_{n+1}^\d$ it follows that
\begin{align*}
\Delta_n & \le \frac{2}{p^*(2c_0)^{p^*-1}} (\mu_n^\d)^{p^*} \|L_{i_n}(x_n^\d)^*
J_s^{\Y_{i_n}}(r_n^\d)\|^{p^*} + \ep_n+ \ep_{n+1} \nonumber\\
& \quad \, + \mu_n^\d \l J_s^{\Y_{i_n}}(r_n^\d), L_{i_n}(x_n^\d)(\hat x-x_n^\d)\r.
\end{align*}
By writing
\begin{equation*}
L_{i_n}(x_n^\d) (\hat x - x_n^\d) = - r_n^\d
-\left[y_{i_n}^\d -F_{i_n}(x_n^\d)-L_{i_n}(x_n^\d) (\hat x-x_n^\d)\right],
\end{equation*}
we may use the condition $\|y_{i_n}^\d-y_{i_n}\|\le \d$, Assumption \ref{A1} (c), and the properties
of $J_s^{\Y_{i_n}}$ to obtain
\begin{align*}
& \l J_s^{\Y_{i_n}}(r_n^\d), L_{i_n}(x_n^\d)(\hat x - x_n^\d)\r \\
&  \le -\|r_n^\d\|^s + \|r_n^\d\|^{s-1} \|y_{i_n}^\d -F_{i_n}(x_n^\d)-L_{i_n}(x_n^\d) (\hat x-x_n^\d)\|\\
& \le -\|r_n^\d\|^s + \|r_n^\d\|^{s-1} \left((1+\gamma)\d + \gamma\|r_n^\d\|\right).
\end{align*}
Therefore 
\begin{align}\label{2014.9.24.1}
\Delta_n & \le \frac{2}{p^*(2c_0)^{p^*-1}} (\mu_n^\d)^{p^*} \|L_{i_n}(x_n^\d)^*
J_s^{\Y_{i_n}}(r_n^\d)\|^{p^*} + \ep_n+ \ep_{n+1} \nonumber\\
& \quad \, - \mu_n^\d \|r_n^\d\|^s + \mu_n^\d \|r_n^\d\|^{s-1} \left((1+\gamma) \d + \gamma \|r_n^\d\|\right).
\end{align}
By the definition of $\mu_n^\d$ we can see that
\begin{align*}
(\mu_n^\d)^{p^*} \|L_{i_n}(x_n^\d)^* J_s^{\Y_{i_n}}(r_n^\d)\|^{p^*}
& = \mu_n^\d (\mu_n^\d)^{p^*-1} \|L_{i_n}(x_n^\d)^* J_s^{\Y_{i_n}}(r_n^\d)\|^{p^*}\\
& \le \beta_0^{p^*-1} \mu_n^\d \|r_n^\d\|^{(s-1)p^*} \left(\|r_n^\d\|^p+\sigma \ep_n\right)^{\frac{(p-s)(p^*-1)}{p}}\\
& \le \beta_0^{p^*-1} \mu_n^\d \left(\|r_n^\d\|^p+\sigma \ep_n\right)^{\frac{s}{p}}
\end{align*}
and 
\begin{align*}
\mu_n^\d \|r_n^\d\|^{s-1}\left((1+\gamma) \d + \gamma \|r_n^\d\|\right) 
& \le \frac{1+\gamma}{\tau} \mu_n^\d \|r_n^\d\|^{s-1}\left(\|r_n^\d\|^p + \sigma\ep_n\right)^{\frac{1}{p}} +\gamma \mu_n^\d \|r_n^\d\|^s\\
& \le \left(\frac{1+\gamma}{\tau} + \gamma\right) \mu_n^\d \left(\|r_n^\d\|^p +\sigma \ep_n\right)^{\frac{s}{p}}.
\end{align*}
Combining the above two estimates with (\ref{2014.9.24.1}) we can obtain
\begin{align}\label{eq:16.1.16}
\Delta_n  & \le \left[\frac{2}{p^*}\left(\frac{\beta_0}{2 c_0}\right)^{p^*-1} +\frac{1+\gamma}{\tau} + \gamma\right]
\mu_n^\d \left(\|r_n^\d\|^p+\sigma\ep_n\right)^{\frac{s}{p}} \nonumber\\
& \quad \, - \mu_n^\d \| r_n^\d\|^s + \ep_n+ \ep_{n+1}.
\end{align}

We next consider the term $\mu_n^\d \|r_n^\d\|^s$. We claim that 
\begin{align}\label{eq:16.1.15}
\mu_n^\d \|r_n^\d\|^s \ge \frac{1}{\beta} \mu_n^\d \left(\|r_n^\d\|^p + \sigma\ep_n\right)^{\frac{s}{p}}
-\kappa \tilde \mu_n^\d \sigma \ep_n,
\end{align}
where $\kappa>0$ is the constant defined by (\ref{eq:16.1.14}). Indeed, this is trivial when $\mu_n^\d =0$. 
We only need to consider the case that $\mu_n^\d \ne 0$.
If $p\ge s$, then
\begin{align*}
\mu_n^\d \|r_n^\d\|^s &= \tilde \mu_n^\d \left(\|r_n^\d\|^p + \sigma \ep_n\right)^{\frac{p-s}{p}} \|r_n^\d\|^s
\ge \tilde \mu_n^\d \|r_n^\d\|^p \\
& = \tilde \mu_n^\d \left(\|r_n^\d\|^p + \sigma\ep_n\right) - \tilde \mu_n^\d \sigma \ep_n\\
& = \mu_n^\d \left(\|r_n^\d\|^p+ \sigma \ep_n\right)^{\frac{s}{p}} -\tilde \mu_n^\d \sigma \ep_n.
\end{align*}
If $p<s$, we may use the inequality
$
(a+b)^t \le \beta a^t + \beta(\beta^{\frac{1}{t-1}}-1)^{1-t} b^t
$
for $a, b\ge 0$ and $t>1$ to derive that
$$
\|r_n^\d\|^s \ge \frac{1}{\beta} \left(\|r_n^\d\|^p+ \sigma \ep_n\right)^{\frac{s}{p}}
-(\beta^{\frac{p}{s-p}}-1)^{\frac{p-s}{p}} (\sigma \ep_n)^{\frac{s}{p}}.
$$
Thus, by using $\mu_n^\d =\tilde \mu_n^\d (\|r_n^\d\|^p+\sigma \ep_n)^{\frac{p-s}{p}}
\le \tilde \mu_n^\d (\sigma \ep_n)^{\frac{p-s}{p}}$, we have
\begin{align*}
\mu_n^\d \|r_n^\d\|^s \ge \frac{1}{\beta} \mu_n^\d \left(\|r_n^\d\|^p+ \sigma \ep_n\right)^{\frac{s}{p}}
-(\beta^{\frac{p}{s-p}}-1)^{\frac{p-s}{p}} \tilde \mu_n^\d \sigma \ep_n.
\end{align*}
We therefore obtain (\ref{eq:16.1.15}).

Combining (\ref{eq:16.1.16}) and (\ref{eq:16.1.15}) we thus have 
\begin{equation*}
\Delta_n \le - c_1 \mu_n^\d \left(\|r_n^\d\|^p+\sigma \ep_n\right)^{\frac{s}{p}} 
+\kappa \tilde \mu_n^\d \sigma \ep_n + \ep_n+ \ep_{n+1},
\end{equation*}
where $c_1>0$ is the constant defined by (\ref{12.2.1}). Since $\tilde \mu_n^\d \le \beta_1$ and 
$\kappa \beta_1 \sigma\le 1$, we therefore obtain (\ref{5.30.1}).

Now we use an induction argument to show that $x_n \in B_{2\rho}(x_0)\cap \D(\Theta)$ for all $n\ge 0$.
This is trivial when $n=0$. Assume that there is some $m\ge 0$ such that
$x_n \in B_{2\rho}(x_0)\cap \D(\Theta)$ for $0\le n<m$. Thus (\ref{5.30.1})
holds for all $0\le n<m$. By taking $\hat x = x^\dag$ in (\ref{5.30.1}) and summing it over these $n$ gives
\begin{equation*}
D_{\xi_m^\d}^{\ep_m} \Theta(x^\dag, x_m) -D_{\xi_0} \Theta(x^\dag, x_0)
\le \sum_{n=0}^{m-1} (2\ep_n+\ep_{n+1}).
\end{equation*}
In view of Lemma \ref{lem2}, (\ref{12.5}) in Assumption \ref{A1} and (\ref{2014.9.24.2}) we can obtain
\begin{align*}
c_0\|x_m-x^\dag\|^p & \le 2 D_{\xi_m^\d}^{\ep_m} \Theta(x^\dag, x_m^\d) + 2\ep_m
\le 2 D_{\xi_0}\Theta(x^\dag, x_0) + 8 \sum_{n=0}^\infty \ep_n\\
& \le \frac{1}{2} c_0 \rho^p + \frac{1}{2} c_0 \rho^p = c_0 \rho^p.
\end{align*}
This implies that $\|x_m^\d -x^\dag\| \le \rho$. By virtue of (\ref{12.5}) and Lemma \ref{lem2} we
also have $\|x_0-x^\dag\| \le \rho$. Therefore $\|x_m^\d -x_0\| \le 2\rho$, i.e. $x_m^\d \in B_{2\rho}(x_0)$.
Consequently, (\ref{5.30.1}) holds for all $n\ge 0$ which gives (\ref{eq:11.10}) immediately. By summing (\ref{5.30.1})
over $n$ from $0$ to $\infty$ we can obtain
\begin{equation}\label{eq:11.13}
c_1 \sum_{n=0}^\infty \mu_n^\d \left(\|r_n^\d\|^p+\sigma \ep_n\right)^{\frac{s}{p}}
\le D_{\xi_0}\Theta(\hat x, x_0) + 3 \sum_{n=0}^\infty \ep_n<\infty.
\end{equation}

Finally we show that $n_\d<\infty$. If it is not true, then for each integer $d\ge 0$ there is at least one
integer $k$ with $Nd \le k\le Nd +N-1$ such that $\|r_k^\d\|^p +\sigma \ep_k>(\tau \d)^p$. Consequently
\begin{equation*}
\mu_k^\d = \tilde\mu_k^\d \left(\|r_k^\d\|^p+\sigma\ep_k\right)^{1-\frac{s}{p}} 
\ge c_2 \left(\|r_k^\d\|^p+\sigma \ep_k\right)^{1-\frac{s}{p}}
\end{equation*}
with $c_2 = \min\{\beta_0 B^{-p}, \beta_1\}$, where we used (\ref{eq:L}). Therefore
\begin{equation*}
\sum_{n=Nd}^{Nd+N-1} \mu_n^\d \left(\|r_n^\d\|^p+\sigma \ep_n\right)^{\frac{s}{p}} 
\ge \mu_k^\d \left(\|r_k^\d\|^p+\sigma \ep_k\right)^{\frac{s}{p}} \ge c_2 \left(\|r_k^\d\|^p+\sigma \ep_k\right)
\ge c_2 (\tau \d)^p
\end{equation*}
and hence
\begin{align*}
\sum_{n=0}^\infty \mu_n^\d \left(\|r_n^\d\|^p +\sigma \ep_n\right)^{\frac{s}{p}} 
= \sum_{d=0}^\infty \sum_{n=Nd}^{Nd+N-1} \mu_n^\d \left(\|r_n^\d\|^p+\sigma\ep_n\right)^{\frac{s}{p}}
\ge c_2 \sum_{d=0}^\infty (\tau \d)^p =\infty
\end{align*}
which is a contradiction to (\ref{eq:11.13}).
\end{proof}

\subsection{The method with exact data}

In order to prove the regularization property of Algorithm \ref{alg1}, we first consider its
counterpart where the noisy data $y_i^\d$ are replaced by the exact data $y_i$. That is, we will
consider the following algorithm.

\begin{algorithm}[Landweber-Kaczmarz method with exact data] \label{alg2} \quad

\noindent
{\it
Let $\beta_0>0$, $\beta_1>0$, $\sigma>0$, $\{\ep_n\}$, $x_0\in \X$ and $\xi_0\in \X^*$ be the same as in
Algorithm \ref{alg1}. For $n=0, 1, \cdots$ we define $r_n = F_{i_n}(x_n) -y_{i_n}$ and update 
\begin{equation}\label{6.27}
\begin{array}{lll}
\xi_{n+1}  =\xi_n-\mu_n L_{i_n}(x_n)^* J_s^{\Y_{i_n}}(r_n),\\[1ex]
x_{n+1}  = S_{\ep_{n+1}}(\xi_{n+1}),
\end{array}
\end{equation}
where $i_n = n \, (\mbox{mod } N)$ and
\begin{equation}\label{6.26}
\mu_n  =\tilde{\mu}_n \left(\|r_n\|^p+\sigma \ep_n\right)^{1-\frac{s}{p}} 
\end{equation}
with
$$
\tilde{\mu}_n = \min\left\{\frac{\beta_0 \|r_n\|^{p(s-1)}}
{\|L_{i_n}(x_n)^* J_s^{\Y_{i_n}}(r_n)\|^p}, \beta_1\right\}.
$$
}
\end{algorithm}

By using the same argument in the proof of Lemma \ref{lem7.26}, we can obtain the following result on
Algorithm \ref{alg2}.

\begin{lemma}\label{lem11.11}
Let $\X$ and $\Y_i$ be reflexive Banach spaces with $\Y_i$ being uniformly smooth, let $\Theta$ and $F_i$, $i=0, \cdots, N-1$
satisfy Assumption \ref{A1}, and let $\{\ep_n\}$ be a sequence of positive numbers satisfying (\ref{2014.9.24.2}).
Let $\beta>1$ be a constant such that $\beta \gamma <1$. If $\beta_0>0$ is chosen such that
\begin{equation}\label{11.2}
\frac{1}{\beta} -\gamma -\frac{2}{p^*} \left(\frac{\beta_0}{2 c_0}\right)^{p^*-1} >0
\end{equation}
and if $\sigma>0$ is chosen such that $\kappa \beta_1 \sigma \le 1$, where $\kappa$ is defined by (\ref{eq:16.1.14}),
then for Algorithm \ref{alg2} there holds $x_n \in B_{2\rho}(x_0)\cap \D(\Theta)$ for all $n=0, 1, \cdots$,
and for any solution $\hat x$ of (\ref{sys}) in $B_{2\rho}(x_0) \cap \D(\Theta)$ there hold
\begin{align}
& D_{\xi_{n+1}}^{\ep_{n+1}} \Theta(\hat x, x_{n+1})-D_{\xi_n}^{\ep_{n}} \Theta (\hat x, x_n)
\le 2\ep_{n} + \ep_{n+1}, \quad n =0, 1, \cdots \label{eq:11.3}\\
& \sum_{n=0}^\infty \mu_n \left(\|r_n\|^p + \sigma \ep_n\right)^{\frac{s}{p}} <\infty. \label{eq:11.3.1}
\end{align}
\end{lemma}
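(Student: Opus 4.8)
The plan is to follow the proof of Lemma~\ref{lem7.26} essentially verbatim, specializing it to the case of exact data, i.e.\ taking the noise level to be zero, and to exploit the two structural simplifications this brings. First, in Algorithm~\ref{alg2} the step size $\mu_n$ is given unconditionally by (\ref{6.26}); there is no discrepancy-driven branch that sets $\mu_n=0$, so the per-step estimate must (and does) hold for every $n$. Second, because there is no data noise the residual estimate loses the term carrying $(1+\gamma)\delta$, which is precisely why the positivity condition (\ref{11.2}) has no $\frac{1+\gamma}{\tau}$ contribution compared with (\ref{12.2.1}).

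The core step is to establish the one-step inequality
\begin{equation*}
D_{\xi_{n+1}}^{\ep_{n+1}}\Theta(\hat x, x_{n+1}) - D_{\xi_n}^{\ep_n}\Theta(\hat x, x_n) \le -c_1\mu_n\left(\|r_n\|^p + \sigma\ep_n\right)^{\frac{s}{p}} + 2\ep_n + \ep_{n+1},
\end{equation*}
valid whenever $x_n\in B_{2\rho}(x_0)\cap\D(\Theta)$, with $c_1>0$ now given by (\ref{11.2}). To do this I set $\Delta_n := D_{\xi_{n+1}}^{\ep_{n+1}}\Theta(\hat x, x_{n+1}) - D_{\xi_n}^{\ep_n}\Theta(\hat x, x_n)$ and run the same chain of estimates that produced (\ref{2014.9.24.1}): using $\xi_n\in\p_{\ep_n}\Theta(x_n)$ together with (\ref{4.10.2014}), the definition of $\Theta^*$, the Fenchel estimate (\ref{Lip}) of Lemma~\ref{Fenchel}, and Lemma~\ref{lem3}. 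The only place $\delta$ entered before is the decomposition of $L_{i_n}(x_n)(\hat x - x_n)$; with the exact $y_{i_n}$ in place of $y_{i_n}^\delta$ and $F_{i_n}(\hat x)=y_{i_n}$, Assumption~\ref{A1}(c) yields $\langle J_s^{\Y_{i_n}}(r_n), L_{i_n}(x_n)(\hat x - x_n)\rangle \le -(1-\gamma)\|r_n\|^s$. After inserting $(\mu_n)^{p^*}\|L_{i_n}(x_n)^* J_s^{\Y_{i_n}}(r_n)\|^{p^*} \le \beta_0^{p^*-1}\mu_n(\|r_n\|^p + \sigma\ep_n)^{s/p}$ and applying the auxiliary claim (\ref{eq:16.1.15}) — whose derivation never used $\delta$ and hence carries over unchanged — the coefficient of $\mu_n(\|r_n\|^p + \sigma\ep_n)^{s/p}$ collapses to exactly $-c_1$ from (\ref{11.2}), while the leftover $\kappa\tilde\mu_n\sigma\ep_n$ is absorbed into $\ep_n$ via $\tilde\mu_n\le\beta_1$ and $\kappa\beta_1\sigma\le 1$.

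With this inequality in hand I would reproduce the induction of Lemma~\ref{lem7.26}: assuming $x_n\in B_{2\rho}(x_0)\cap\D(\Theta)$ for $0\le n<m$, summing with $\hat x=x^\dag$, and combining with Lemma~\ref{lem2}, the source condition (\ref{12.5}), and the smallness hypothesis (\ref{2014.9.24.2}) to conclude $\|x_m-x^\dag\|\le\rho$ and hence $x_m\in B_{2\rho}(x_0)$. This closes the induction and certifies the one-step inequality for all $n$. Discarding the nonpositive term $-c_1\mu_n(\cdots)^{s/p}$ yields (\ref{eq:11.3}), and summing over all $n$ gives $c_1\sum_n\mu_n(\|r_n\|^p+\sigma\ep_n)^{s/p}\le D_{\xi_0}\Theta(\hat x, x_0)+3\sum_n\ep_n<\infty$, which is (\ref{eq:11.3.1}).

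Since the argument is a direct specialization, there is no genuinely new difficulty; the step requiring the most care is verifying that removing the discrepancy mechanism does not break the per-step estimate. Concretely, one must confirm that with exact data the residual bound loses exactly the $(1+\gamma)\delta$ contribution — so that the $\frac{1+\gamma}{\tau}$ term is correctly absent from $c_1$ — while the noise-independent pieces, namely the Fenchel/Bregman estimates and the splitting claim (\ref{eq:16.1.15}), are reproduced unchanged, so that the weaker condition (\ref{11.2}) indeed suffices in place of (\ref{12.2.1}).
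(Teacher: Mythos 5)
Your proposal is correct and matches the paper exactly: the paper proves this lemma simply by remarking that the argument of Lemma \ref{lem7.26} carries over, and your specialization (setting $\d=0$ so the residual decomposition gives $\l J_s^{\Y_{i_n}}(r_n), L_{i_n}(x_n)(\hat x-x_n)\r \le -(1-\gamma)\|r_n\|^s$, which removes the $\frac{1+\gamma}{\tau}$ term and leaves precisely the constant in (\ref{11.2}), while the Fenchel/Bregman estimates, the splitting claim (\ref{eq:16.1.15}), and the induction keeping $x_n\in B_{2\rho}(x_0)$ are unchanged) is exactly what is intended.
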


By the definition of $\mu_n$ and (\ref{eq:L}) one can see that
\begin{equation*}
\mu_n \left(\|r_n\|^p+\sigma \ep_n\right)^{\frac{s}{p}} 
\ge \min\{\beta_0 B^{-p}, \beta_1\} \left(\|r_n\|^p +\sigma \ep_n\right).
\end{equation*}
Thus, it follows from (\ref{eq:11.3.1}) that
\begin{equation}\label{eq:11.3.2}
\sum_{n=0}^\infty \left(\|r_n\|^p + \ep_n\right)<\infty.
\end{equation}

In the following we will show that the sequence $\{x_n\}$ defined by Algorithm \ref{alg2} converges
to a solution of (\ref{sys}). We first prove that the sequence $\{D_{\xi_n}^{\ep_{n}} \Theta(\hat x, x_n)\}$
is convergent.

\begin{lemma}\label{cor16}
Let all the conditions in Lemma \ref{lem11.11} hold. Then, for any solution $\hat x$ of (\ref{sys})
in $B_{2\rho}(x_0)\cap \D(\Theta)$, the sequence $\{D_{\xi_n}^{\ep_{n}} \Theta(\hat x, x_n)\}$ is convergent.
\end{lemma}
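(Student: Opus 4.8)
The plan is to recognize the sequence $a_n := D_{\xi_n}^{\ep_n}\Theta(\hat x, x_n)$ as an \emph{almost monotone} sequence and to apply the standard telescoping correction. First I would record the two facts I need. On the one hand, the $\ep$-Bregman distance is nonnegative, so $a_n \ge 0$ for every $n$; moreover, since $x_n \in B_{2\rho}(x_0)\cap \D(\Theta)$ by Lemma \ref{lem11.11} and $\hat x \in \D(\Theta)$, each $a_n$ is a finite real number. On the other hand, the monotonicity estimate (\ref{eq:11.3}) in Lemma \ref{lem11.11} reads
\begin{equation*}
a_{n+1} - a_n \le 2\ep_n + \ep_{n+1}, \qquad n = 0, 1, \cdots,
\end{equation*}
so $\{a_n\}$ fails to be non-increasing only by the perturbation $2\ep_n + \ep_{n+1}$, whose series converges because $\sum_{n\ge 0}\ep_n <\infty$.

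Next I would remove this perturbation by subtracting its tail. Setting
\begin{equation*}
c_n := a_n + \sum_{k=n}^\infty (2\ep_k + \ep_{k+1}),
\end{equation*}
which is well defined and finite by the summability of $\{\ep_n\}$, the estimate above gives
\begin{equation*}
c_{n+1} = a_{n+1} + \sum_{k=n+1}^\infty (2\ep_k + \ep_{k+1}) \le a_n + \sum_{k=n}^\infty (2\ep_k + \ep_{k+1}) = c_n,
\end{equation*}
so $\{c_n\}$ is non-increasing. Since in addition $c_n \ge a_n \ge 0$, the sequence $\{c_n\}$ is bounded below, hence convergent, say $c_n \to c^*$ with $c^* \ge 0$.

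Finally, because $\sum_{k\ge 0}(2\ep_k + \ep_{k+1}) <\infty$, its tail $\sum_{k=n}^\infty(2\ep_k+\ep_{k+1})$ tends to $0$ as $n\to\infty$. Therefore
\begin{equation*}
a_n = c_n - \sum_{k=n}^\infty (2\ep_k + \ep_{k+1}) \longrightarrow c^*,
\end{equation*}
which proves that $\{D_{\xi_n}^{\ep_n}\Theta(\hat x, x_n)\}$ converges. I expect no genuine obstacle here: the argument is the routine ``monotone-up-to-a-summable-error'' lemma, and the only points that really use the hypotheses of Lemma \ref{lem11.11} are the nonnegativity and finiteness of $a_n$ (to guarantee boundedness below) and the summability $\sum \ep_n<\infty$ (to make the correction $c_n$ finite and its tail vanish).
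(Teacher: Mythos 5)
Your proof is correct and rests on exactly the same two ingredients as the paper's: the recursive bound $a_{n+1}\le a_n+2\ep_n+\ep_{n+1}$ from (\ref{eq:11.3}) together with the nonnegativity of $a_n$ and the summability of $\{\ep_n\}$. The only difference is presentational — you subtract the tail $\sum_{k\ge n}(2\ep_k+\ep_{k+1})$ to produce a genuinely non-increasing bounded sequence, whereas the paper argues directly that $\limsup_n a_n\le a_k+\sum_{j\ge k}(2\ep_j+\ep_{j+1})$ and lets $k\to\infty$ to conclude $\liminf_k a_k\ge\limsup_n a_n$; both are standard write-ups of the same ``monotone up to a summable perturbation'' lemma.
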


\begin{proof}
Let
\begin{equation*}
a_n := D_{\xi_n}^{\ep_{n}} \Theta(\hat x, x_n) \quad \mbox{and} \quad
\beta_n := 2\ep_{n} + \ep_{n+1}.
\end{equation*}
It follows from (\ref{eq:11.3}) in Lemma \ref{lem11.11} and $\sum_n \ep_n<\infty$ that
\begin{equation*}
0\le a_{n+1} \le a_n + \beta_n,  \quad n=0, 1, \cdots
\end{equation*}
with $\sum_{n=0}^\infty \beta_n <\infty$. It is easily seen that
\begin{equation*}
0\le a_n \le a_0 +\sum_{k=0}^{n-1} \beta_k \le a_0 +\sum_{k=0}^\infty \beta_k <\infty,
\end{equation*}
i.e. $\{a_n\}$ is a bounded sequence. Thus $\limsup_{n\rightarrow \infty} a_n$ exists and is finite. Since for $k<n$ we have
$a_n \le a_k +\sum_{j=k}^{n-1} \beta_j$, we may derive that
$
\limsup_{n\rightarrow \infty} a_n \le a_k +\sum_{j=k}^\infty \beta_j.
$
This then implies that
\begin{equation*}
\liminf_{k\rightarrow \infty} a_k \ge \limsup_{n\rightarrow \infty} a_n  -\lim_{k \rightarrow \infty} \sum_{j=k}^\infty \beta_j
=\limsup_{n\rightarrow \infty} a_n.
\end{equation*}
Therefore $\liminf_{n\rightarrow \infty} a_n =\limsup_{n\rightarrow \infty} a_n$, i.e. $\lim_{n\rightarrow \infty} a_n$ exists.
\end{proof}

\begin{lemma}\label{lem11.13}
For all $n\ge 0$ there holds
\begin{equation*}
\|x_n-\nabla \Theta^*(\xi_n)\|^p \le \frac{p}{2 c_0} \ep_n.
\end{equation*}
\end{lemma}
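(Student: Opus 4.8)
The plan is to recognize this as an immediate corollary of Lemma \ref{lem3}, so the only real work is to supply the hypothesis of that lemma, namely an $\ep$-subdifferential membership. First I would record the exact-data analogue of (\ref{2014.9.25.1}): since $x_n$ in Algorithm \ref{alg2} is produced by the inexact inner solver, $x_n = S_{\ep_n}(\xi_n)$, the defining property (\ref{min_inexact}) of $S_{\ep_n}$ in Assumption \ref{A2} gives $\Theta(x_n) - \l \xi_n, x_n\r \le \Theta(x) - \l \xi_n, x\r + \ep_n$ for every $x \in \X$. Rearranging this inequality is precisely the statement that $\xi_n \in \p_{\ep_n} \Theta(x_n)$. (For $n=0$ we have the stronger fact $\xi_0 \in \p \Theta(x_0)$ by the initialization, so one may take $\ep_0 = 0$ there; note $\p \Theta(x_0) \subset \p_{\ep} \Theta(x_0)$ for every $\ep \ge 0$, so the general argument covers $n=0$ as well.)

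With the membership $\xi_n \in \p_{\ep_n} \Theta(x_n)$ in hand, I would simply invoke Lemma \ref{lem3} with the substitution $\ep = \ep_n$, $x = x_n$ and $\xi = \xi_n$. Its second conclusion (\ref{eq38}) reads $\|x - \nabla \Theta^*(\xi)\|^p \le \frac{p}{2c_0}\ep$, which under this substitution is exactly the asserted bound $\|x_n - \nabla \Theta^*(\xi_n)\|^p \le \frac{p}{2c_0}\ep_n$. Here I am also tacitly using that Lemma \ref{Fenchel} guarantees $\nabla \Theta^*(\xi_n)$ is well-defined, since $\Theta$ is proper, lower semi-continuous and $p$-convex and $\X$ is reflexive, so $\D(\Theta^*) = \X^*$ and $\Theta^*$ is Fr\'echet differentiable everywhere.

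There is essentially no obstacle here: the statement is a one-line consequence once the inexact-solver property is translated into $\ep$-subdifferential language, and that translation is the same bookkeeping already carried out for the noisy iterates in (\ref{2014.9.25.1}). If anything needs care, it is only the boundary case $n=0$, where one should note that the initialization yields the \emph{exact} subgradient relation $\xi_0 \in \p \Theta(x_0)$, hence $x_0 = \nabla \Theta^*(\xi_0)$ and the bound holds with both sides equal to zero when $\ep_0$ is taken to be $0$.
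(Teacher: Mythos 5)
Your proof is correct and follows the same route as the paper: it records the membership $\xi_n \in \p_{\ep_n}\Theta(x_n)$ coming from the inexact solver property and then applies (\ref{eq38}) of Lemma \ref{lem3}. The extra remarks on the case $n=0$ are a harmless refinement of the paper's one-line argument.
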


\begin{proof}
Recall $\xi_n \in \p_{\ep_n} \Theta(x_n)$. This result follows from Lemma \ref{lem3} immediately.
\end{proof}

\begin{lemma}\label{lem2015.11.12}
Let all the conditions in Lemma \ref{lem11.11} hold. Then there is a universal constant $C$ such that
\begin{equation*}
\|F_i(x_n)-y_i\|^p \le C \sum_{k=\min\{n, n+i-i_n\}}^{\max\{n, n+i-i_n\}} \left(\|r_k\|^p+ \ep_k\right)
\end{equation*}
for all $n\ge 0$ and $i=0, \cdots, N-1$, where $i_n = n \, (\mbox{mod } N)$.
\end{lemma}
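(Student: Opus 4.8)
The plan is to exploit the cyclic Kaczmarz structure to localize the bound: the index $i$ we wish to evaluate is ``hit'' at a nearby iteration. Set $m := n + i - i_n$; since $0\le i\le N-1$ and $i_n = n\,(\mathrm{mod}\,N)$ we have $i_m = i$, so $r_m = F_i(x_m)-y_i$ and $|n-m| = |i-i_n|\le N-1$. Thus the whole argument runs over the finite block of indices from $\min\{n,m\}$ to $\max\{n,m\}$, which contains at most $N$ terms. Writing $\|F_i(x_n)-y_i\|\le \|r_m\| + \|F_i(x_n)-F_i(x_m)\|$ and using (\ref{12.6.1}), which is legitimate because Lemma \ref{lem11.11} keeps every iterate in $B_{2\rho}(x_0)$, it suffices to control $\|x_n-x_m\|$.

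To bound $\|x_n-x_m\|$ I would pass from the inexact iterates to the smooth quantities $\nabla\Theta^*(\xi_k)$. By Lemma \ref{lem11.13} we have $\|x_k-\nabla\Theta^*(\xi_k)\|\le (\tfrac{p}{2c_0}\ep_k)^{1/p}$, so the triangle inequality gives $\|x_n-x_m\|\le (\tfrac{p}{2c_0}\ep_n)^{1/p} + \|\nabla\Theta^*(\xi_n)-\nabla\Theta^*(\xi_m)\| + (\tfrac{p}{2c_0}\ep_m)^{1/p}$, and the H\"older continuity (\ref{10.13.4}) of $\nabla\Theta^*$ reduces the middle term to $\left(\|\xi_n-\xi_m\|/(2c_0)\right)^{1/(p-1)}$.

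The core step is to estimate $\|\xi_n-\xi_m\|$. Telescoping the update (\ref{6.27}) gives $\|\xi_n-\xi_m\|\le \sum_k g_k$ with $g_k := \mu_k\|L_{i_k}(x_k)^* J_s^{\Y_{i_k}}(r_k)\|$, the sum running over the block. The inequality $(\mu_k)^{p^*}\|L_{i_k}(x_k)^* J_s^{\Y_{i_k}}(r_k)\|^{p^*}\le \beta_0^{p^*-1}\mu_k(\|r_k\|^p+\sigma\ep_k)^{s/p}$, obtained exactly as in the proof of Lemma \ref{lem7.26} (applied now to Algorithm \ref{alg2}), together with the elementary identity $\mu_k(\|r_k\|^p+\sigma\ep_k)^{s/p} = \tilde\mu_k(\|r_k\|^p+\sigma\ep_k)\le \beta_1(\|r_k\|^p+\sigma\ep_k)$, yields $g_k\le C_1(\|r_k\|^p+\sigma\ep_k)^{1/p^*}$. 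The exponent bookkeeping here is the delicate point: I would use $1/p^* = (p-1)/p$ together with the subadditivity $(\sum_k t_k)^{1/(p-1)}\le \sum_k t_k^{1/(p-1)}$, valid because $p\ge 2$ forces $1/(p-1)\le 1$, applied to $t_k = (\|r_k\|^p+\sigma\ep_k)^{(p-1)/p}$. Since $\tfrac{p-1}{p}\cdot\tfrac{1}{p-1}=\tfrac1p$, the composite exponent collapses exactly to $1/p$, so $\left(\|\xi_n-\xi_m\|/(2c_0)\right)^{1/(p-1)}\le C_2\sum_k(\|r_k\|^p+\sigma\ep_k)^{1/p}$.

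Combining the three reductions, and absorbing the endpoint contributions $\ep_n^{1/p}$, $\ep_m^{1/p}$, $\|r_m\|$ (since both $n$ and $m$ lie in the block), I get $\|F_i(x_n)-y_i\|\le C_3\sum_k(\|r_k\|^p+\sigma\ep_k)^{1/p}$ over the block. Raising to the power $p$ and applying H\"older's inequality $(\sum_{k=1}^M c_k)^p\le M^{p-1}\sum_k c_k^p$ with $M\le N$ and $c_k=(\|r_k\|^p+\sigma\ep_k)^{1/p}$ converts the $p$-th power of the sum back into a sum, giving $\|F_i(x_n)-y_i\|^p\le C_3^p N^{p-1}\sum_k(\|r_k\|^p+\sigma\ep_k)$; finally $\sigma$ is folded into the constant to produce the asserted universal $C$. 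The main obstacle I anticipate is precisely this exponent tracking through the chain $1/p^*\to$ the H\"older-continuity exponent $1/(p-1)\to$ the final $p$-th power: the cancellations that return everything to the clean form $\sum(\|r_k\|^p+\ep_k)$ rely on $p\ge 2$ and on the number of block indices being bounded by $N$, and one must verify that all constants remain independent of $n$ and $i$.
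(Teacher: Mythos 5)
Your proof is correct and follows essentially the same route as the paper's: reduce to controlling $\|x_n-x_m\|$ via (\ref{12.6.1}), pass to $\nabla\Theta^*$ using Lemma \ref{lem11.13} and the H\"older continuity (\ref{10.13.4}), and bound each $\xi$-increment by $C\left(\|r_k\|^p+\sigma\ep_k\right)^{1/p^*}$ so that the exponents collapse to $1/p$ and a final application of $(\sum_k c_k)^p\le N^{p-1}\sum_k c_k^p$ gives the claim. The only organizational difference is that the paper telescopes the consecutive differences $x_{Nd+j+1}-x_{Nd+j}$ and applies the power-$p$ H\"older inequality once at the outset, whereas you treat the single jump $x_n-x_m$ and therefore need the additional (valid, since $p\ge 2$) subadditivity $(\sum_k t_k)^{1/(p-1)}\le \sum_k t_k^{1/(p-1)}$ to distribute the sum of increments; both versions rely on the same lemmas and yield a constant depending only on $N$, $p$, $B$, $\gamma$, $\beta_0$, $\beta_1$, $\sigma$ and $c_0$.
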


\begin{proof}
We consider the case $i\ge i_n$; the case $i<i_n$ can be considered similarly. Let $d = (n-i_n)/N$.
By the triangle inequality and (\ref{12.6.1}) we have
\begin{align*}
\|F_i(x_n)-y_i\|  &= \|F_i(x_{Nd+i_n})-y_i\| \\
& \le \|F_i(x_{Nd+i})-y_i\| + \|F_i(x_{Nd+i})-F_i(x_{Nd+i_n})\|\\
& \le \|r_{Nd+i}\| + \sum_{j=i_n}^{i-1} \|F_i(x_{Nd+j+1})-F_i(x_{Nd+j})\| \nonumber\\
& \le \|r_{Nd+i}\| + \frac{B}{1-\gamma} \sum_{j=i_n}^{i-1} \|x_{Nd+j+1}-x_{Nd+j}\|.
\end{align*}
By virtue of the H\"{o}lder inequality and $i-i_n \le N-1$, we can obtain
\begin{align}\label{eq:11.7}
& \|F_i(x_n)-y_i\|^p  \nonumber\\
& \le N^{p-1} \left(\|r_{Nd+i}\|^p + \frac{B^p}{(1-\gamma)^p} \sum_{j=i_n}^{i-1} \|x_{Nd+j+1}-x_{Nd+j}\|^p\right).
\end{align}
In view of Lemma \ref{lem11.13} and (\ref{10.13.4}), we have
\begin{align*}
& \|x_{Nd+j+1}-x_{Nd+j}\|^p\\
&\le 3^{p-1}\big(\|x_{Nd+j+1}-\nabla \Theta^*(\xi_{Nd+j+1})\|^p + \|x_{Nd+j} -\nabla \Theta^*(\xi_{Nd+j})\|^p\\
&\quad \, + \|\nabla \Theta^*(\xi_{Nd+j+1})-\nabla \Theta^*(\xi_{Nd+j})\|^p\big) \\
&\le \frac{p 3^{p-1}}{2c_0} (\ep_{Nd+j}+\ep_{Nd+j+1}) + 3^{p-1}\left(\frac{\|\xi_{Nd+j+1}-\xi_{Nd+j}\|}{2 c_0}\right)^{p^*}.
\end{align*}
By the definition of $\xi_{Nd+j+1}$ and $\mu_{Nd+j}$ we have
\begin{align*}
\|\xi_{Nd+j+1}-\xi_{Nd+j}\| & = \mu_{Nd+j} \|L_j(x_{Nd+j})^* J_s^{\Y_j}(r_{Nd+j})\|\\
&\le B \mu_{Nd+j} \|r_{Nd+j}\|^{s-1} \\
&\le B \beta_1 \left(\|r_{Nd+j}\|^p+\sigma \ep_{Nd+j}\right)^{1-\frac{1}{p}}.
\end{align*}
Thus
\begin{align*}
& \|x_{Nd+j+1}-x_{Nd+j}\|^p\\
& \le \frac{p3^{p-1}}{2c_0} (\ep_{Nd+j}+\ep_{Nd+j+1}) 
+ 3^{p-1}\left(\frac{B \beta_1}{2 c_0}\right)^{p^*} \left(\|r_{Nd+j}\|^p+\sigma \ep_{Nd+j}\right).
\end{align*}
Combining this with (\ref{eq:11.7}) gives
\begin{align*}
\|F_i(x_n)-y_i\|^p 
& \le C\Big(\|r_{Nd+i}\|^p + \sum_{j=i_n}^{i-1} (\|r_{Nd+j}\|^p + \ep_{Nd+j+1}+\ep_{Nd+j})\Big)\\
& \le C \sum_{k=n}^{n+i-i_n} \left(\|r_k\|^p + \ep_k\right),
\end{align*}
where $C$ is a universal constant. This completes the proof.
\end{proof}

\begin{corollary}\label{cor1}
Let all the conditions in Lemma \ref{lem11.11} hold. Then
\begin{equation*}
\lim_{n\rightarrow \infty} \|F_i(x_n)-y_i\| =0
\end{equation*}
for all $i=0, \cdots, N-1$.
\end{corollary}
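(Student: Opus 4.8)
The plan is to read off the conclusion directly from the pointwise bound in Lemma \ref{lem2015.11.12} together with the summability established in (\ref{eq:11.3.2}). First I would recall that Lemma \ref{lem2015.11.12} gives, for every $n\ge 0$ and every $i=0,\cdots,N-1$,
\begin{equation*}
\|F_i(x_n)-y_i\|^p \le C \sum_{k=\min\{n,\,n+i-i_n\}}^{\max\{n,\,n+i-i_n\}} \left(\|r_k\|^p+\ep_k\right),
\end{equation*}
so it suffices to show that the right-hand side tends to $0$ as $n\to\infty$ for each fixed $i$.

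The key observation is that the summation window has uniformly bounded length. Since $i_n = n\,(\mathrm{mod}\,N)$ and $0\le i\le N-1$, we have $|i-i_n|\le N-1$, so the indices $k$ appearing in the sum all lie in the interval $[\,n-(N-1),\,n+(N-1)\,]$, and there are at most $N$ of them. Consequently the sum is dominated by a tail of the series in (\ref{eq:11.3.2}): writing $m_n := \min\{n,\,n+i-i_n\}\ge n-(N-1)$, I would bound
\begin{equation*}
\sum_{k=\min\{n,\,n+i-i_n\}}^{\max\{n,\,n+i-i_n\}}\left(\|r_k\|^p+\ep_k\right)
\le \sum_{k\ge m_n}^{\infty}\left(\|r_k\|^p+\ep_k\right).
\end{equation*}

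Finally, because $\sum_{k=0}^\infty(\|r_k\|^p+\ep_k)<\infty$ by (\ref{eq:11.3.2}), its tails vanish: for any $\eta>0$ there is $M$ with $\sum_{k\ge M}(\|r_k\|^p+\ep_k)<\eta/C$. Since $m_n\to\infty$ as $n\to\infty$, for all sufficiently large $n$ we have $m_n\ge M$, whence $\|F_i(x_n)-y_i\|^p<\eta$. As $\eta>0$ is arbitrary and $i$ ranges over a finite set, this yields $\lim_{n\to\infty}\|F_i(x_n)-y_i\|=0$ for every $i$, completing the proof. There is essentially no hard step here; the only point requiring a moment's care is verifying that the window index $m_n$ indeed diverges (which follows from $m_n\ge n-(N-1)$), so that the bounded-length window is eventually pushed into the tail of the convergent series.
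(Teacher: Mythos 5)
Your proposal is correct and is exactly the paper's argument: the paper's proof of Corollary \ref{cor1} consists of the single sentence that the result follows from Lemma \ref{lem2015.11.12} and (\ref{eq:11.3.2}), and you have simply filled in the (routine) details of why the bounded-length summation window is eventually absorbed into a vanishing tail of the convergent series.
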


\begin{proof}
This result follows from Lemma \ref{lem2015.11.12} and (\ref{eq:11.3.2}).
\end{proof}

Now we are ready to prove the convergence on the sequence $\{x_n\}$ defined by Algorithm \ref{alg2}.

\begin{theorem}\label{thm:exact}
Let all the conditions in Lemma \ref{lem11.11} hold. Then for Algorithm \ref{alg2} there exists a
solution $x_*\in B_{2\rho}(x_0)\cap \D(\Theta)$ of (\ref{sys}) such that
$$
\lim_{n\rightarrow \infty} \|x_n-x_*\|=0 \quad \mbox{ and } \quad
\lim_{n\rightarrow \infty} D_{\xi_n}^{\ep_n} \Theta(x_*, x_n) =0.
$$
\end{theorem}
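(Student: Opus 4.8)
The plan is to prove that $\{x_n\}$ is a Cauchy sequence in $\X$; once its limit $x_*$ is identified, showing that $x_*$ solves \eqref{sys} and that the $\ep$-Bregman distances vanish will be comparatively routine. Because Lemma \ref{lem2} gives $c_0\|x_n-x_m\|^p\le 2 D_{\xi_m}^{\ep_m}\Theta(x_n,x_m)+2\ep_m$ (using $\xi_m\in\p_{\ep_m}\Theta(x_m)$ from \eqref{2014.9.25.1}), it suffices to show that $D_{\xi_m}^{\ep_m}\Theta(x_n,x_m)\to 0$ as $n\ge m\to\infty$.

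The engine is the three-point identity
\[
D_{\xi_m}^{\ep_m}\Theta(x_n,x_m)=\left[D_{\xi_m}^{\ep_m}\Theta(\hat x,x_m)-D_{\xi_n}^{\ep_n}\Theta(\hat x,x_n)\right]+\langle\xi_m-\xi_n,\hat x-x_n\rangle+\ep_n,
\]
valid for any solution $\hat x\in B_{2\rho}(x_0)\cap\D(\Theta)$ and obtained by writing out the definition of the $\ep$-Bregman distance. By Lemma \ref{cor16} the bracketed difference tends to $0$ as $m,n\to\infty$, and $\ep_n\to0$, so everything reduces to the cross term. Using $\xi_m-\xi_n=\sum_{k=m}^{n-1}\mu_k L_{i_k}(x_k)^* J_s^{\Y_{i_k}}(r_k)$, I would expand each $\langle J_s^{\Y_{i_k}}(r_k),L_{i_k}(x_k)(\hat x-x_n)\rangle$ through the splitting $\hat x-x_n=(\hat x-x_k)+(x_k-x_n)$, invoking Assumption \ref{A1}(c) and $F_{i_k}(\hat x)=y_{i_k}$ to turn it into a ``good'' negative multiple of $\|r_k\|^s$ plus error terms controlled by $\|r_k\|^{s-1}$ times $\|r_k\|$ and $\|F_{i_k}(x_n)-y_{i_k}\|$. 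This should yield a bound of the form $\langle\xi_m-\xi_n,\hat x-x_n\rangle\le C\sum_{k=m}^{n-1}\mu_k\big(\|r_k\|^s+\|r_k\|^{s-1}\|F_{i_k}(x_n)-y_{i_k}\|\big)$.

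The hard part is this cross term. The first piece is harmless, since $\mu_k\|r_k\|^s\le\mu_k(\|r_k\|^p+\sigma\ep_k)^{s/p}$, whose total is finite by \eqref{eq:11.3.1}, so its tail is small. The delicate piece is $\sum_k\mu_k\|r_k\|^{s-1}\|F_{i_k}(x_n)-y_{i_k}\|$: because $\mu_k\|r_k\|^{s-1}$ is in general not summable, bounding $\|F_{i_k}(x_n)-y_{i_k}\|$ merely by its supremum over $i$ is insufficient. Here I would exploit Corollary \ref{cor1} and especially Lemma \ref{lem2015.11.12}, which control $\|F_i(x_n)-y_i\|^p$ by a finite window of $\sum(\|r_k\|^p+\ep_k)$, in combination with a Kaczmarz-style choice of an intermediate index $l\in[m,n]$ at which $\|r_l\|$ is minimal, estimating $\|x_m-x_l\|$ and $\|x_l-x_n\|$ separately so that the minimal residual can absorb the non-summable weights. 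Reconciling the non-summable weights with the residual windows is where I expect the main difficulty to lie, and it is the step that genuinely uses the cyclic structure of the method.

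Finally, granting $x_n\to x_*$: the ball $B_{2\rho}(x_0)$ is closed, so $x_*\in B_{2\rho}(x_0)$; by \eqref{12.6.1} each $F_i$ is continuous there, and Corollary \ref{cor1} gives $F_i(x_n)\to y_i$, whence $F_i(x_*)=y_i$ for every $i$, i.e. $x_*$ solves \eqref{sys}. Lower semicontinuity of $\Theta$ together with the boundedness of $\{D_{\xi_n}^{\ep_n}\Theta(x^\dag,x_n)\}$ yields $x_*\in\D(\Theta)$. Applying Lemma \ref{cor16} with $\hat x=x_*$ then shows that $D_{\xi_n}^{\ep_n}\Theta(x_*,x_n)$ converges, and I would identify its limit as $0$ by feeding $\hat x=x_*$ into the three-point identity above and using the already-established Cauchy property of $\{x_n\}$ together with lower semicontinuity of $\Theta$.
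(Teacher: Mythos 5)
You have correctly identified the skeleton of the argument: the three-point identity you write down is exactly the paper's (\ref{eq12.1}), the cross term $\langle\xi_m-\xi_n,\hat x-x_n\rangle$ is indeed the crux, and the supporting facts you invoke (Lemma \ref{cor16}, Lemma \ref{lem2015.11.12}, the summability statements (\ref{eq:11.3.1})--(\ref{eq:11.3.2}), and the final upgrade from Bregman convergence to norm convergence via Lemma \ref{lem2}) are the ones the paper uses. However, the one step you explicitly leave open --- ``reconciling the non-summable weights with the residual windows'' --- is precisely the mathematical content of the proof, and your sketch does not contain the device that makes it work. After H\"{o}lder, the problematic contribution from a pair of block indices $d<d'$ has the form $C R_d^{1/p^*} R_{d'}^{1/p}$, where $R_d=\sum_{n=Nd}^{Nd+N-1}(\|r_n\|^p+\ep_n)$; this is dominated by the summable quantity $R_d$ only if $R_{d'}\le R_d$. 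The paper secures this by \emph{not} attempting to prove that the whole sequence is Cauchy: it extracts a subsequence $n_k=Nd_k$, with $d_k$ chosen so that $R_d\ge R_{d_k}$ for \emph{every} $d\le d_k$ (inequality (\ref{eq81})), proves the Cauchy property only along $\{x_{n_k}\}$, identifies the limit $x_*$, and then recovers convergence of the full sequence at the very end from Lemma \ref{cor16} together with the vanishing of $D_{\xi_{n_k}}^{\ep_{n_k}}\Theta(x_*,x_{n_k})$.

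Your alternative --- a minimal-residual intermediate index $l\in[m,n]$ in the style of the classical Hilbert-space Landweber proof --- is not obviously wrong, but as written it does not mesh with your displayed identity: there the ``test point'' whose residuals enter the cross term is the \emph{later} iterate $x_n$, so minimality of $\|r_l\|$ at an intermediate $l$ buys you nothing. To make that route work you would have to (i) minimize the block quantity $R_d$ rather than a single residual $\|r_l\|$, since individual residuals belong to different equations $F_i$ and are not comparable across a cycle, and (ii) run the three-point identity twice, for the pairs $(m,l)$ and $(l,n)$, each time arranged so that $x_l$ is the first argument of the Bregman distance, so that Lemma \ref{lem2015.11.12} produces the factor $R_{d_l}^{1/p}$ with the minimal block in both halves. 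None of this is carried out in your proposal, so the proof has a genuine gap at its central step. A minor additional imprecision: boundedness of $D_{\xi_n}^{\ep_n}\Theta(x^\dag,x_n)$ alone gives a lower, not an upper, bound on $\Theta(x_n)$; to conclude $x_*\in\D(\Theta)$ you need the upper bound $\Theta(x_{n_k})\le\Theta(\hat x)+\langle\xi_{n_k},x_{n_k}-\hat x\rangle+\ep_{n_k}$ coming from $\xi_{n_k}\in\p_{\ep_{n_k}}\Theta(x_{n_k})$, together with the uniform bound on the pairings $\langle\xi_{n_k},x_{n_k}-\hat x\rangle$ that falls out of the cross-term estimate.
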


\begin{proof}
We first show that $\{x_n\}$ has a convergent subsequence. To this end, we consider.
\begin{equation*}
R_d := \sum_{n= Nd}^{Nd+N-1} \left(\|r_n\|^p+\ep_n\right), \quad d =0, 1, \cdots.
\end{equation*}
Then $R_d>0$ and, in view of (\ref{eq:11.3.2}), we have
$R_d\rightarrow 0$ as $d\rightarrow \infty$. We may choose a strictly increasing subsequence $\{d_k\}$
of integers such that $d_0=0$ and $d_k$, for each $k \ge 1$, is the first integer satisfying
\begin{equation*}
d_k \ge d_{k-1}+1  \quad \mbox{ and } \quad R_{d_k} \le R_{d_{k-1}}.
\end{equation*}
For this $\{d_k\}$ it can be seen that
\begin{equation}\label{eq81}
R_d \ge R_{d_k} \qquad \forall d\le d_k.
\end{equation}
Indeed, for any $d$ satisfying $0\le d<d_k$, we can find $0\le l< k$ such that $d_l \le d < d_{l+1}$ and thus,
by the definition of $d_{l+1}$, we have $R_d\ge R_{d_l} \ge R_{d_k}$.

With the above chosen $\{d_k\}$, we set $\{n_k\} := \{N d_k\}$ and show that $\{x_{n_k}\}$ is convergent.
To this end, we consider the $\ep$-Bregman distance $D_{\xi_{n_k}}^{\ep_{n_k}} \Theta(x_{n_l}, x_{n_k})$
for $k<l$. Let $\hat x$ be any solution of (\ref{sys}) in $B_{2\rho}(x_0) \cap \D(\Theta)$.
By the definition of $\ep$-Bregman distance, we have
\begin{align}\label{eq12.1}
D_{\xi_{n_k}}^{\ep_{n_k}} \Theta(x_{n_l}, x_{n_k})
& = D_{\xi_{n_k}}^{\ep_{n_k}} \Theta(\hat x, x_{n_k}) -D_{\xi_{n_l}}^{\ep_{n_l}} \Theta(\hat x, x_{n_l}) \nonumber\\
& \quad \, + \l \xi_{n_l}-\xi_{n_k}, x_{n_l}-\hat x\r +\ep_{n_l}.
\end{align}
We need to consider the term $\l \xi_{n_l}-\xi_{n_k}, x_{n_l}-\hat x\r$ for $k<l$. We write
\begin{align*}
\l \xi_{n_l}-\xi_{n_k}, x_{n_l}-\hat x\r &=\l \xi_{N d_l}-\xi_{N d_k}, x_{Nd_l}-\hat x\r \\
& =\sum_{d=d_k}^{d_l-1} \l \xi_{N(d+1)}-\xi_{Nd}, x_{Nd_l}-\hat x\r.
\end{align*}
According to the definition of $\xi_{n+1}$ we have
\begin{align*}
\l \xi_{N(d+1)}-\xi_{Nd}, x_{Nd_l}-\hat x\r
&= \sum_{n=Nd}^{Nd+N-1} \l \xi_{n+1}-\xi_n, x_{N d_l}-\hat x\r\\
& = \sum_{n=Nd}^{Nd+N-1} \mu_n \l J_s^{\Y_{i_n}}(r_n), L_{i_n}(x_n)(x_{Nd_l}-\hat x)\r.
\end{align*}
By the Cauchy-Schwarz inequality, the property of $J_s^{\Y_{i_n}}$ and the definition of $\mu_n$ we obtain
\begin{align*}
\left|\l \xi_{N(d+1)}-\xi_{Nd}, x_{Nd_l}-\hat x\r\right|
& \le \sum_{n=Nd}^{Nd+N-1} \mu_n \|r_n\|^{s-1} \|L_{i_n}(x_n)(x_{Nd_l}-\hat x)\|\\
& \le \beta_1 \sum_{n=Nd}^{Nd+N-1} \left(\|r_n\|^p + \sigma \ep_n\right)^{1-\frac{1}{p}} \|L_{i_n}(x_n)(x_{Nd_l}-\hat x)\|.
\end{align*}
By Assumption \ref{A1} (c) we have
\begin{align*}
\| L_{i_n}(x_n)(x_{Nd_l}-\hat x)\|
& \le \| L_{i_n}(x_n) (x_n-\hat x)\| + \|L_{i_n}(x_n) (x_{Nd_l}-x_n)\|\\
& \le (1+\gamma) \left(\|F_{i_n}(x_n)-F_{i_n}(\hat x)\| + \|F_{i_n}(x_{Nd_l}) - F_{i_n}(x_n)\|\right) \\
& \le (1+\gamma) \left( 2\|r_n\|+ \|F_{i_n}(x_{N d_l})-y_{i_n}\|\right).
\end{align*}
Combining the above two equations and using the H\"{o}lder inequality, we can find a universal constant $C$ such that 
\begin{align*}
& \left|\l \xi_{N(d+1)}-\xi_{Nd}, x_{N d_l}-\hat x\r\right|\\
& \le C R_d + C \sum_{n=Nd}^{Nd+N-1} \left(\|r_n\|^p+\ep_n\right)^{1-\frac{1}{p}} \|F_{i_n}(x_{N d_l})-y_{i_n}\|\\
& \le C R_d + C R_d^{1/p^*} \left(\sum_{n=Nd}^{Nd+N-1} \|F_{i_n}(x_{N d_l})-y_{i_n}\|^p\right)^{1/p}.
\end{align*}
By using Lemma \ref{lem2015.11.12} we have
\begin{align*}
\sum_{n=Nd}^{Nd+N-1} \|F_{i_n}(x_{Nd_l})-y_{i_n}\|^p
& \le C \sum_{n=Nd}^{Nd+N-1} \sum_{k=N d_l}^{Nd_l+i_n} \left(\|r_k\|^p + \ep_k\right)\\
& \le C N \sum_{k=Nd_l}^{Nd_l+N-1} \left(\|r_k\|^p + \ep_k\right)\\
& = C N R_{d_l}.
\end{align*}
Consequently, it follows from (\ref{eq81}) that
\begin{equation*}
\left|\l \xi_{N(d+1)}-\xi_{Nd}, x_{N d_l}-\hat x\r\right| \le C R_d + C R_d^{1/p^*} R_{d_l}^{1/p} \le C R_d.
\end{equation*}
Therefore
\begin{align*}
|\l \xi_{n_l}-\xi_{n_k}, x_{n_l}-\hat x\r|
\le  C \sum_{d=d_k}^{d_l-1} R_d = C \sum_{n=n_k}^{n_l-1} \left(\|r_n\|^p  + \ep_n\right).
\end{align*}
This together with (\ref{eq:11.3.2}) implies that
\begin{equation}\label{5.1}
\lim_{k \rightarrow \infty} \sup_{l\ge k} \left|\l \xi_{n_l}-\xi_{n_k}, x_{n_l}-\hat x\r\right| = 0.
\end{equation}
Thus, it follows from (\ref{eq12.1}), Lemma \ref{cor16} and $\lim_{n\rightarrow \infty}\ep_n=0$
that
\begin{equation*}
D_{\xi_{n_k}}^{\ep_{n_k}} \Theta (x_{n_l}, x_{n_k}) \rightarrow 0 \quad \mbox{ as } k, l\rightarrow \infty.
\end{equation*}
In view of Lemma \ref{lem2} and $\lim_{n\rightarrow \infty} \ep_n=0$, we can conclude
\begin{equation*}
\|x_{n_l}-x_{n_k}\|\rightarrow 0 \qquad \mbox{as } k, l\rightarrow \infty,
\end{equation*}
i.e. $\{x_{n_k}\}$ is a Cauchy sequence in $\X$. Thus $x_{n_k}\rightarrow x_*$ as $k\rightarrow \infty$
for some $x_*\in \X$. By using Corollary \ref{cor1} and the continuity of $F_i$ we have
$F_i(x_*) = y_i$ for all $i=0, \cdots, N-1$.

We next show that
$
x_*\in B_{2\rho}(x_0) \cap \D(\Theta).
$
Since $\{x_{n_k}\}\subset B_{2\rho}(x_0)$,
 we must have $x_*\in B_{2\rho}(x_0)$. By using $\xi_{n_k}\in \p_{\ep_{n_k}} \Theta(x_{n_k})$ we can obtain
\begin{equation}\label{5.5.3.1}
\Theta(x_{n_k})\le \Theta(\hat{x}) +\l \xi_{n_k}, x_{n_k}-\hat{x}\r +\ep_{n_k}.
\end{equation}
In view of (\ref{5.1}) and $x_{n_k}\rightarrow x_*$, there is a constant $C_0$ such that
\begin{equation*}
|\l \xi_{n_k} -\xi_{n_0}, x_{n_k}-\hat x\r| \le C_0 \quad \mbox{and} \quad |\l \xi_{n_0}, x_{n_k}-\hat x\r| \le C_0, \quad \forall k.
\end{equation*}
Thus $|\l \xi_{n_k}, x_{n_k}-\hat{x}\r|\le 2C_0$ for all $k$. By using the lower semi-continuity of $\Theta$ and
$\lim_{n\rightarrow \infty} \ep_n=0$ we obtain from (\ref{5.5.3.1}) that
\begin{equation*}
\Theta(x_*)\le \liminf_{k\rightarrow\infty} \Theta(x_{n_k})\le \Theta(\hat{x}) + 2C_0 <\infty.
\end{equation*}
This implies that $x_*\in \D(\Theta)$. Therefore $x_*$ is a solution of (\ref{sys}) in $B_{2\rho}(x_0)\cap \D(\Theta)$.

Finally we show that the whole sequence $\{x_n\}$ converges to $x_*$. Let
\begin{equation*}
\eta_0 := \lim_{n\rightarrow \infty} D_{\xi_n}^{\ep_n} \Theta(x_*, x_n)
\end{equation*}
whose existence is guaranteed by Lemma \ref{cor16}. By the non-negativity of $\ep$-Bregman distance,
we have $\eta_0\ge 0$. In (\ref{eq12.1}) we set $\hat x = x_*$ and take $l\rightarrow \infty$ to derive that
\begin{align*}
D_{\xi_{n_k}}^{\ep_{n_k}} \Theta(x_*, x_{n_k})
\le D_{\xi_{n_k}}^{\ep_{n_k}} \Theta(x_*, x_{n_k}) -\eta_0 + \sup_{l\ge k} \left|\l \xi_{n_l}-\xi_{n_k}, x_{n_l}-x_*\r\right|.
\end{align*}
This implies that
\begin{equation*}
\eta_0 \le \sup_{l\ge k} \left|\l \xi_{n_l}-\xi_{n_k}, x_{n_l}-x_*\r\right|
\end{equation*}
for all $k$. In view of (\ref{5.1}), by taking $k\rightarrow \infty$ we obtain $\eta_0\le 0$. Therefore $\eta_0=0$, that is,
$\lim_{n\rightarrow \infty} D_{\xi_n}^{\ep_n} \Theta(x_*, x_n) =0$. By using Lemma \ref{lem2}
and $\lim_{n\rightarrow \infty} \ep_n=0$ we can obtain $\lim_{n\rightarrow \infty} \|x_n-x_*\|=0$.
\end{proof}

\subsection{Regularization property}

We return to Algorithm \ref{alg1} and prove its regularization property. We need the following
stability result.

\begin{lemma}\label{stability0}
Let all the conditions in Lemma \ref{lem7.26} hold. Then for all $n\ge 0$ there hold
\begin{equation*}
\xi_n^{\d} \rightarrow \xi_n \quad \mbox{ and } \quad x_n^{\d}\rightarrow x_n
\quad \mbox{as } \d\rightarrow 0.
\end{equation*}
\end{lemma}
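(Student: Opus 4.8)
The plan is to prove the stability result by induction on $n$. The statement claims that the iterates of Algorithm \ref{alg1} depend continuously on the data in the limit $\d\to 0$: as the noise vanishes, $\xi_n^\d\to\xi_n$ and $x_n^\d\to x_n$, where $\xi_n, x_n$ are the iterates of the exact-data Algorithm \ref{alg2}. The base case $n=0$ is immediate, since by construction $\xi_0^\d=\xi_0$ and $x_0^\d=x_0$ for all $\d$. For the inductive step I would assume $\xi_n^\d\to\xi_n$ and $x_n^\d\to x_n$ as $\d\to 0$ and deduce the corresponding convergence for index $n+1$.

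The key is to trace through the update formulas (\ref{6.27.0}) and compare them with (\ref{6.27}). First I would establish that $r_n^\d=F_{i_n}(x_n^\d)-y_{i_n}^\d\to F_{i_n}(x_n)-y_{i_n}=r_n$ as $\d\to 0$: this follows from the inductive hypothesis $x_n^\d\to x_n$, the continuity of $F_{i_n}$ on $B_{2\rho}(x_0)$ established in (\ref{12.6.1}), and the data bound $\|y_{i_n}^\d-y_{i_n}\|\le\d\to 0$. Then I would pass to the limit in the step-size formula. The delicate point is the definition of $\mu_n^\d$ in (\ref{6.27.5}), which is a two-branch (piecewise) expression depending on whether $\|r_n^\d\|^p+\sigma\ep_n>(\tau\d)^p$ or not. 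As $\d\to 0$ the threshold $(\tau\d)^p\to 0$, so the inequality $\|r_n^\d\|^p+\sigma\ep_n>(\tau\d)^p$ eventually holds (for all small $\d$) whenever $\|r_n\|^p+\sigma\ep_n>0$; hence $\mu_n^\d$ converges to the single-branch $\mu_n$ of (\ref{6.26}). Using the continuity of $x\to L_{i_n}(x)$ on $B_{2\rho}(x_0)$ from Assumption \ref{A1}(c), the uniform continuity of the duality mapping $J_s^{\Y_{i_n}}$ on bounded sets (valid since $\Y_{i_n}$ is uniformly smooth), and the convergence $r_n^\d\to r_n$, I obtain $L_{i_n}(x_n^\d)^*J_s^{\Y_{i_n}}(r_n^\d)\to L_{i_n}(x_n)^*J_s^{\Y_{i_n}}(r_n)$ and thus $\tilde\mu_n^\d\to\tilde\mu_n$ and $\mu_n^\d\to\mu_n$. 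Combining these gives $\xi_{n+1}^\d\to\xi_{n+1}$. Finally, since $x_{n+1}^\d=S_{\ep_{n+1}}(\xi_{n+1}^\d)$ and the inexact solver $S_{\ep_{n+1}}$ is continuous by Assumption \ref{A2}, I conclude $x_{n+1}^\d\to x_{n+1}$, completing the induction.

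The main obstacle I expect is handling the degenerate case $\|r_n\|^p+\sigma\ep_n=0$, i.e. where $r_n=0$ (the residual vanishes exactly, which forces $\ep_n=0$ as well). In that borderline situation the argument that $\|r_n^\d\|^p+\sigma\ep_n>(\tau\d)^p$ eventually holds may fail, since both sides tend to zero, and one must verify directly that $\mu_n^\d\to\mu_n=0$ and hence that the $\xi$-update is still continuous in the limit. I would address this by noting that in the exact algorithm $r_n=0$ yields $\mu_n=0$ and $\xi_{n+1}=\xi_n$, and that in the noisy algorithm $\mu_n^\d\,\|L_{i_n}(x_n^\d)^*J_s^{\Y_{i_n}}(r_n^\d)\|$ is bounded by a quantity of order $(\|r_n^\d\|^p+\sigma\ep_n)^{(s-1)/p}\to 0$ regardless of which branch is active, so the increment $\xi_{n+1}^\d-\xi_n^\d\to 0=\xi_{n+1}-\xi_n$; together with $\xi_n^\d\to\xi_n$ this still gives $\xi_{n+1}^\d\to\xi_{n+1}$. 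A secondary technical point is ensuring all iterates remain in the bounded set $B_{2\rho}(x_0)$ so that the continuity and uniform-continuity properties apply; this is guaranteed by Lemma \ref{lem7.26}(i) for the noisy iterates and Lemma \ref{lem11.11} for the exact iterates, so no separate argument is needed.
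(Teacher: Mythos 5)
Your argument is correct and follows essentially the same route as the paper's proof: induction on $n$, with the generic case handled by passing to the limit in each factor of the update (using that the threshold $\|r_n^\d\|^p+\sigma\ep_n>(\tau\d)^p$ holds for all small $\d$ because $\ep_n>0$), and the case $r_n=0$ handled by bounding the increment $\xi_{n+1}^\d-\xi_n^\d$ directly rather than trying to show $\tilde\mu_n^\d\to\tilde\mu_n$. Two small corrections: since the $\ep_n$ are positive by hypothesis, $r_n=0$ does not force $\ep_n=0$, and consequently the quantity you invoke in the degenerate case, $(\|r_n^\d\|^p+\sigma\ep_n)^{(s-1)/p}$, does \emph{not} tend to zero; the correct vanishing bound is $\beta_1 B\left(\|r_n^\d\|^p+\sigma\ep_n\right)^{1-\frac{s}{p}}\|r_n^\d\|^{s-1}\to 0$, where the factor $\|r_n^\d\|^{s-1}$ must be retained. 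You should also note, as the paper does, the sub-case $r_n\ne 0$ with $L_{i_n}(x_n)^*J_s^{\Y_{i_n}}(r_n)=0$, where continuity of the ratio fails but one still gets $\tilde\mu_n^\d=\beta_1=\tilde\mu_n$ for small $\d$ since the ratio blows up.
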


\begin{proof}
The result is trivial for $n=0$. We next assume that the result is true for some $n\ge 0$
and show that $\xi_{n+1}^{\d}\rightarrow \xi_{n+1}$ and $x_{n+1}^{\d}\rightarrow x_{n+1}$
as $\d\rightarrow 0$. We consider two cases.

{\it Case 1: $F_{i_n}(x_n)=y_{i_n}$}. In this case we have $\xi_{n+1} =\xi_n$. Therefore
\begin{equation*}
\xi_{n+1}^{\d}-\xi_{n+1} =\xi_n^{\d}-\xi_n -\mu_n^{\d} L_{i_n}(x_n^\d)^*
J_s^{\Y_{i_n}} (r_n^\d).
\end{equation*}
Since $\mu_n^{\d} \le \beta_1 (\|r_n^\d\|^p+\sigma\ep_n)^{1-\frac{s}{p}}$, we may use the property
of $J_s^{\Y_{i_n}}$ and (\ref{eq:L}) to obtain
\begin{equation*}
\|\xi_{n+1}^{\d}-\xi_{n+1}\| \le\|\xi_n^{\d}-\xi_n\| +\beta_1 B \left(\|r_n^\d\|^p+\sigma \ep_n\right)^{1-\frac{s}{p}}
\|r_n^\d\|^{s-1}.
\end{equation*}
By the induction hypothesis and the continuity of $F_{i_n}$, we then have $\xi_{n+1}^{\d}\rightarrow \xi_{n+1}$
as $\d\rightarrow 0$. By the definition of $x_{n+1}^\d$ and the continuity of $S_{\ep}$ we have
\begin{equation*}
x_{n+1}^{\d} = S_{\ep_{n+1}}(\xi_{n+1}^{\d}) \rightarrow S_{\ep_{n+1}}(\xi_{n+1})=x_{n+1}
\end{equation*}
as $\d\rightarrow 0$.

{\it Case 2: $F_{i_n}(x_n)\ne y_{i_n}$}. Since $\ep_n>0$, we have $\|r_n^\d\|^p+\sigma\ep_n >(\tau \d)^p$ for small $\d>0$. Thus
\begin{equation*}
\mu_n = \tilde \mu_n \left(\|r_n\|^p+\sigma \ep_n\right)^{1-\frac{s}{p}} \quad \mbox{ and } \quad
\mu_n^\d = \tilde \mu_n^\d \left(\|r_n^\d\|^p+\sigma \ep_n\right)^{1-\frac{s}{p}}.
\end{equation*}
We claim that $\mu_n^{\d} \rightarrow \mu_n$ as $\d\rightarrow 0$. In fact, if
$
L_{i_n}(x_n)^* J_s^{\Y_{i_n}} (r_n) = 0,
$
then, by definition, we must have $\tilde \mu_n=\beta_1$ and $\tilde \mu_n^{\d} =\beta_1$ for small $\d>0$. If
$
L_{i_n}(x_n)^* J_s^{\Y_{i_n}}(r_n) \ne 0,
$
then, by the definition of $\tilde\mu_n^\d$ and $\tilde\mu_n$ and the induction hypothesis, we can
conclude that $\tilde \mu_n^{\d}\rightarrow \tilde \mu_n$ as $\d\rightarrow 0$. In any case,
we always have $\tilde \mu_n^{\d} \rightarrow \tilde \mu_n$ as $\d\rightarrow 0$. Consequently,
by the induction hypotheses and the continuity of $F_{i_n}$, $L_{i_n}$ and $J_s^{\Y_{i_n}}$, we have
$\mu_n^{\d}\rightarrow \mu_n$ and therefore $\xi_{n+1}^{\d}\rightarrow \xi_{n+1}$ as $\d\rightarrow 0$.
By invoking again the continuity of $S_{\ep}$, we obtain $x_{n+1}^{\d}
\rightarrow x_{n+1}$ as $\d\rightarrow 0$.
\end{proof}

\begin{theorem}\label{T6.4}
Assume that all the conditions in Lemma \ref{lem7.26} hold. Then for Algorithm \ref{alg1} there hold
\begin{equation*}
\lim_{\d\rightarrow 0} \|x_{n_\d}^\d-x_*\|=0 \qquad \mbox{and} \qquad
\lim_{\d\rightarrow 0} D_{\xi_{n_\d}^\d}^{\ep_{n_\d}} \Theta(x_*, x_{n_\d}^\d) =0,
\end{equation*}
where $x_*$ is a solution of (\ref{sys}) in $B_{2\rho}(x_0) \cap \D(\Theta)$.
\end{theorem}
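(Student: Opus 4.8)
The plan is to prove Theorem \ref{T6.4} via a standard regularization argument that bridges the noisy and exact algorithms using the stopping rule and the stability result of Lemma \ref{stability0}. I would distinguish two cases according to how the stopping index $n_\d$ behaves as $\d \to 0$.

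First I would consider the case where there is a sequence $\d_k \to 0$ along which $n_{\d_k} \to \bar n$ for some finite $\bar n$. Since $q_{n_\d} = N$ requires the discrepancy-type inequality $\|r_n^\d\|^p + \sigma \ep_n \le (\tau\d)^p$ to hold for $N$ consecutive indices ending at $n_\d$, stability (Lemma \ref{stability0}) gives $\xi_{\bar n}^{\d_k} \to \xi_{\bar n}$ and $x_{\bar n}^{\d_k} \to x_{\bar n}$, and passing to the limit in the stopping criterion forces $\|F_{i}(x_{\bar n}) - y_{i}\| = 0$ for the relevant equations; combined with the exact-data convergence (Theorem \ref{thm:exact}) and the continuity of $F_i$ I would identify $x_{\bar n} = x_*$ and conclude directly.

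The main case is when $n_\d \to \infty$ as $\d \to 0$. Here I would let $x_*$ be the limit of $\{x_n\}$ furnished by Theorem \ref{thm:exact} and estimate $D_{\xi_{n_\d}^\d}^{\ep_{n_\d}} \Theta(x_*, x_{n_\d}^\d)$. The idea is to fix an arbitrary $m$, write
\begin{align*}
D_{\xi_{n_\d}^\d}^{\ep_{n_\d}} \Theta(x_*, x_{n_\d}^\d)
& \le \left[D_{\xi_{n_\d}^\d}^{\ep_{n_\d}} \Theta(x_*, x_{n_\d}^\d) - D_{\xi_m^\d}^{\ep_m} \Theta(x_*, x_m^\d)\right] \\
& \quad + \left[D_{\xi_m^\d}^{\ep_m} \Theta(x_*, x_m^\d) - D_{\xi_m}^{\ep_m} \Theta(x_*, x_m)\right]
+ D_{\xi_m}^{\ep_m} \Theta(x_*, x_m),
\end{align*}
and control the three brackets separately. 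The first bracket is bounded by $\sum_{n \ge m}(2\ep_n + \ep_{n+1})$ using the monotonicity estimate (\ref{eq:11.10}) (with $\hat x = x_*$) summed from $m$ to $n_\d - 1$, which is small for large $m$ since $\sum_n \ep_n < \infty$. The third bracket tends to $0$ as $m \to \infty$ by the exact-data conclusion $\lim_n D_{\xi_n}^{\ep_n}\Theta(x_*, x_n) = 0$ from Theorem \ref{thm:exact}. The middle bracket, for each fixed $m$, tends to $0$ as $\d \to 0$ by Lemma \ref{stability0} together with the lower semicontinuity of $\Theta$ and continuity of the duality pairing entering the $\ep$-Bregman distance.

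The delicate point is the order of quantifiers: I would first choose $m$ large (depending on a prescribed tolerance) to make the first and third brackets small uniformly in $\d$, and only then send $\d \to 0$ to kill the middle bracket for that fixed $m$. This is precisely where the hypothesis $\sum_n \ep_n < \infty$ is essential, since it makes the tail $\sum_{n \ge m}(2\ep_n + \ep_{n+1})$ arbitrarily small independently of $\d$. Once $\lim_{\d \to 0} D_{\xi_{n_\d}^\d}^{\ep_{n_\d}} \Theta(x_*, x_{n_\d}^\d) = 0$ is established, the norm convergence $\|x_{n_\d}^\d - x_*\| \to 0$ follows immediately from Lemma \ref{lem2} applied with $\xi_{n_\d}^\d \in \p_{\ep_{n_\d}}\Theta(x_{n_\d}^\d)$ (see (\ref{2014.9.25.1})) and $\lim_n \ep_n = 0$. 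I expect the handling of the case $n_\d \to \infty$, specifically the uniform-in-$\d$ control of the Bregman tail before taking the stability limit, to be the step requiring the most care.
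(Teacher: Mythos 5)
Your main-case argument is essentially the paper's proof: the same decomposition of $D_{\xi_{n_\d}^\d}^{\ep_{n_\d}}\Theta(x_*,x_{n_\d}^\d)$ into a tail controlled by $\sum_{k\ge m}\ep_k$ via (\ref{eq:11.10}), a stability term handled by Lemma \ref{stability0} and the lower semicontinuity of $\Theta$, and a term killed by Theorem \ref{thm:exact}, with the same order of limits. Two small remarks: your Case 1 is vacuous, because the stopping rule forces $\sigma\ep_{n_\d}\le(\tau\d)^p$ and $\ep_n>0$ for every $n$, so $n_\d\to\infty$ automatically --- this is how the paper dispatches it in one line, and is fortunate since your sketch there (identifying $x_{\bar n}=x_*$ from the limiting discrepancy conditions, which only say each iterate in the final cycle solves \emph{its own} equation) would otherwise be the shakiest step. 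Also, the middle bracket does not tend to $0$; lower semicontinuity only bounds $-\Theta(x_m^\d)$ from above, so you get $\limsup_{\d\to 0}\bigl[D_{\xi_m^\d}^{\ep_m}\Theta(x_*,x_m^\d)-D_{\xi_m}^{\ep_m}\Theta(x_*,x_m)\bigr]\le 0$, which is all that is needed.
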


\begin{proof}
By the definition of $n_\d$ we have $\sigma \ep_{n_\d} \le (\tau \d)^p$. Since $\ep_n>0$ for all $n$, 
we must have $n_\d\rightarrow \infty$ as $\d\rightarrow 0$.
Let $x_*$ be the solution of (\ref{sys}) in $B_{2\rho}(x_0)\cap \D(\Theta)$ determined in Theorem \ref{thm:exact}.
In view of Lemma \ref{p-conv} and $\lim_{n\rightarrow \infty} \ep_n=0$,  it suffices to show that
\begin{equation}\label{9.26.31}
\lim_{\d\rightarrow 0} D_{\xi_{n_\d}^\d}^{\ep_{n_\d}} \Theta(x_*, x_{n_\d}^\d)=0.
\end{equation}
Let $n$ be an arbitrary but fixed integer. By virtue of Lemma \ref{lem7.26} we have
\begin{eqnarray*}
D_{\xi_{n_\d}^\d}^{\ep_{n_\d}}\Theta(x_*, x_{n_\d}^\d)
& \le D_{\xi_n^\d}^{\ep_n}\Theta(x_*, x_n^\d)
+ 3 \sum_{k=n}^{n_\d} \ep_k.
\end{eqnarray*}
Consequently
\begin{eqnarray*}
& \limsup_{\d\rightarrow 0} D_{\xi_{n_\d}^\d}^{\ep_{n_\d}} \Theta (x_*, x_{n_\d})
\le \limsup_{\d\rightarrow 0} D_{\xi_n^\d}^{\ep_n}\Theta(x_*, x_n^\d)
+ 3 \sum_{k=n}^\infty \ep_k.
\end{eqnarray*}
By making use of Lemma \ref{stability0} and the lower semi-continuity of $\Theta$ we have
\begin{equation*}
\limsup_{\d\rightarrow 0} D_{\xi_n^\d}^{\ep_n}\Theta(x_*, x_n^\d) \le D_{\xi_n}^{\ep_n} \Theta(x_*, x_n).
\end{equation*}
Therefore
\begin{equation*}
\limsup_{\d\rightarrow 0} D_{\xi_{n_\d}^\d}^{\ep_{n_\d}} \Theta (x_*, x_{n_\d})
\le D_{\xi_n}^{\ep_n}\Theta(x_*, x_n) + 3 \sum_{k=n}^\infty \ep_k.
\end{equation*}
Since $n$ can be arbitrarily large, by taking $n\rightarrow \infty$ we can derive from Theorem \ref{thm:exact}
and the condition $\sum_{k=0}^\infty \ep_k <\infty$ that
\begin{equation*}
\limsup_{\d\rightarrow 0} D_{\xi_{n_\d}^\d}^{\ep_{n_\d}} \Theta (x_*, x_{n_\d}) \le 0.
\end{equation*}
This completes the proof.
\end{proof}

\subsection{Acceleration}

It is well-known that Landweber iteration is a slowly convergent method (\cite{EHN96}). In order to make it applicable
in practical applications, acceleration strategies should be incorporated into the method. When an inverse problem
is formulated in Hilbert spaces, a family of accelerated Landweber iterations, including the famous $\nu$-method
of Brakhage [2], have been proposed in [3] using the orthogonal polynomials and the spectral theory of
self-adjoint operators. The acceleration strategy using orthogonal polynomials is no longer available
when an inverse problems is considered in Banach spaces using general convex penalty functions. Instead the
sequential subspace optimization strategy has been employed in \cite{HJW2015,SS2009} to accelerate the method.

In recent years Nesterov's acceleration strategy \cite{Nest1983} has received tremendous consideration in
optimization community. Consider the unconstrained optimization problem
\begin{equation}\label{11.20.1}
\min_{x\in \X} \, \varphi(x)
\end{equation}
in a Hilbert space $\X$ with a continuous differentiable function $\varphi: \X \to {\mathbb R}$. Nesterov's
strategy speeds up the gradient descent method by using a proper extrapolation point at each iteration
step. It takes the form
\begin{equation}\label{11.20.2}
\left\{\begin{array}{lll}
\hat x_n = x_n + {\frac{n}{n+\a} (x_n -x_{n-1})},\\[1ex]
x_{n+1} = \hat x_n - t_n \nabla \varphi(\hat x_n)
\end{array}\right.
\end{equation}
with suitable step sizes $t_n>0$, where $\a\ge 3$ is a fixed number. This strategy has been extended in
various context, even for nonsmooth optimization problems, see \cite{AP2015,BT2009,CD2014}.

Due to the simplicity of Nesterov's strategy, it is natural to consider its use in accelerating our
Landweber-Kaczmarz method. We propose the following accelerated version in which we drop the superscript
$\d$ for all the iterates for simplicity.

\begin{algorithm} \label{alg3}
{\it 
Let $\beta_0>0$, $\beta_1>0$, $\sigma>0$ and $\tau>1$ be suitably chosen numbers, let $\{\ep_n\}$ be a 
summable sequence of positive numbers, and let $\a\ge 3$ be a fixed number.

\begin{enumerate}

\item[(i)] Pick $x_0\in \X$ and $\xi_0\in \X^*$ such that $\xi_0\in \p \Theta(x_0)$. 

\item[(ii)] Let $\xi_{-1} = \xi_0$, $x_{-1} = x_0$, $q_{-1} =0$ and $\hat x_0 = x_0$. For $n\ge 0$ we define
$\hat r_n = F_{i_n}(\hat x_n) - y_{i_n}^\d$ and 
\begin{equation*}
q_{n}  = \left\{\begin{array}{lll}
q_{n-1}+1 \ & \mbox{ if } \|\hat r_n\|^p + \sigma \ep_n \le (\tau\d)^p,\\[0.8ex]
0 & \mbox{ otherwise}.
\end{array}\right.
\end{equation*}
We then update 
\begin{equation}\label{11.19.0}
\left\{\begin{array}{lll}
\hat \xi_n = \xi_n + \frac{n}{n+\a} (\xi_n-\xi_{n-1}),\\[1.2ex]
\hat x_n = x_n +\frac{n}{n+\a} (x_n-x_{n-1}),\\[1.2ex]
\xi_{n+1}  =\hat \xi_n - \mu_n L_{i_n}(\hat x_n)^* J_s^{\Y_{i_n}}\left(\hat r_n\right),\\[1.2ex]
x_{n+1}  = S_{\ep_{n+1}}(\xi_{n+1}),
\end{array}\right.
\end{equation}
where
\begin{equation*}
\mu_n  = \left\{\begin{array}{lll}
\tilde{\mu}_n \left(\|\hat r_n\|^p+\sigma \ep_n\right)^{1-\frac{s}{p}} \ & \mbox{ if } \|\hat r_n\|^p +\sigma \ep_n >(\tau\d)^p,\\[0.8ex]
0 & \mbox{ otherwise}
\end{array}\right.
\end{equation*}
with
\begin{equation*}
\tilde{\mu}_n = \min\left\{\frac{\beta_0 \|\hat r_n\|^{p(s-1)}}
{\|L_{i_n}(\hat x_n)^* J_r^{\Y_{i_n}}(\hat r_n)\|^p}, \beta_1\right\};
\end{equation*}

\item[(iii)] Let $n_\d$ be the first integer such that $q_{n_\d} =N$ and use $x_{n_\d}$
as an approximate solution.
\end{enumerate}
}
\end{algorithm}

Let us give a brief remark on Algorithm \ref{alg3} when $N=1$. Note that, when both $\X$ and $\Y_0$ are Hilbert
spaces, $F_0:\X\to \Y$ is a bounded linear operator and $\Theta(x) =\frac{1}{2}\|x\|^2$, by taking
$J_s^{\Y_0} = J_2^{\Y_0}\equiv \mbox{id}$ and let $S_\varepsilon$ be the exact solver, i.e. $S_\varepsilon(\xi) = \xi$
for any $\xi \in \X$ and $\ep>0$, then (\ref{11.19.0}) becomes
\begin{equation}\label{11.20.3}
\left\{\begin{array}{lll}
\hat x_n = x_n + \frac{n}{n+\a} (x_n-x_{n-1}),\\[1.2ex]
x_{n+1}  = \hat x_n - \mu_n F_0^* \left(F_0 \hat x_n-y_0^\d\right).
\end{array}\right.
\end{equation}
This is exactly the formula (\ref{11.20.2}) applied to (\ref{11.20.1}) with $\varphi(x) = \frac{1}{2} \|F_0 x-y_0^\d\|^2$.
Therefore, it is reasonable to expect that Algorithm \ref{alg3} can converge faster than Algorithm \ref{alg1}.
Currently there is no available theory regarding the convergence of Algorithm \ref{alg3}, however, in the next section we
will use numerical examples to demonstrate its acceleration effect.

\subsection{Construction of inexact inner solvers}

In this subsection we will discuss how to find an inexact solver specified in Assumption \ref{A2}
for solving (\ref{min}) for each $\ep>0$. For those $\Theta$ such that the exact solution of (\ref{min})
can be determined explicitly, we can simply take each $S_\ep$ to be the exact solver. We therefore
consider only those $\Theta$ for which (\ref{min}) does not have an explicit exact solution. We will
focus on the total variation like functions which have significant importance in image reconstruction.

Due to the numerical implementation, we will give the exposition in a discrete setting. Let $\X = {\mathbb R}^{I\times J}$
and for each $z\in \X$ let $\|z\|_F$ denotes the Fr\"obenius norm of $z$. Let $D: \X \to \X\times \X$ be
the discrete gradient operator given by $D z = (D_1 z, D_2 z)$ for $z=(z_{i,j})\in \X$ with
\begin{eqnarray*}
(D_1 z)_{i,j} = \left\{ \begin{array}{lll}
z_{i+1,j}-z_{i,j}  & \mbox{if } i=1, \cdots, I-1; j=1, \cdots, J,\\
z_{1,j}-z_{I,j}  & \mbox{if } i=I; j = 1, \cdots, J
\end{array}\right.
\end{eqnarray*}
and
\begin{eqnarray*}
(D_2 z)_{i,j} = \left\{ \begin{array}{lll}
z_{i,j+1}-z_{i,j}  & \mbox{if } i=1, \cdots, I; j=1, \cdots, J-1,\\
z_{i,1}-z_{i,J}  & \mbox{if } i=1, \cdots, I; j = J.
\end{array}\right.
\end{eqnarray*}
We consider the function
\begin{equation}\label{TV}
\Theta(z) = \frac{1}{2\mu} \|z\|_F^2 + h(D z) + \iota_\C(z), \quad z\in \X,
\end{equation}
where $\mu>0$ is a constant, $\C\subset \X$ is a closed convex set representing the constraints on $z$,
$\iota_\C$ is the indicator function of $\C$, i.e.
\begin{equation*}
\iota_\C(z) = \left\{ \begin{array}{lll}
0 & \mbox{ if } z \in \C,\\
+\infty & \mbox{ otherwise},
\end{array}\right.
\end{equation*}
and $h$ is a function defined on $\X\times \X$ given by
\begin{equation*}
h(u, v) = \sum_{i=1}^I \sum_{j=1}^J \sqrt{u_{i,j}^2 +v_{i,j}^2}, \qquad (u, v) \in \X\times \X.
\end{equation*}
Note that $|z|_{TV}:= h(Dz)$ represents a discrete total variation of $z$. For this $\Theta$ the minimization
problem (\ref{min}) becomes
\begin{equation}\label{3.34}
x = \arg \min_{z\in \X} \left\{ \Psi_P(z):=\frac{1}{2\mu} \|z-\mu \xi\|_F^2 + h(D z) + \iota_\C(z)\right\},
\end{equation}
where $\Psi_P(z)$ is called a primal function. This is a total variation denoising problem \cite{ROF92} for 
which many algorithms have been developed
to solve it approximately. We will use the primal-dual hybrid gradient (PDHG) method introduced in \cite{ZC2008}
which is a special case of the Uzawa algorithm \cite{AHU58}. To formulate the method, we use
the Legendre-Fenchel conjugate $h^*$ of $h$ to rewrite (\ref{3.34}) as
\begin{equation*}
x = \arg \min_{z\in \X} \max_{\la \in \X\times \X}
\left\{ \Phi(z, \la):=\frac{1}{2\mu} \|z-\mu \xi\|_F^2 + \l \la, D z\r_F-h^*(\la) + \iota_\C(z)\right\}.
\end{equation*}
Then the PDHG method takes the form
\begin{align*}
\la_{k+1} & = \arg\max_{\la\in \X\times \X} \left\{{\rm \Phi}(z_k, \la)-\frac{1}{2\mu\tau_k} \|\la-\la_{k}\|_F^2\right\},\\
z_{k+1} & = \arg\min_{z\in \X} \left\{{\rm \Phi}(z, \la_{k+1}) + \frac{1-\theta_k}{2\mu\theta_k}\|z-z_k\|_F^2\right\}
\end{align*}
with suitably chosen step sizes $\tau_k>0$ and $0<\theta_k<1$. Direct manipulation shows that
\begin{align*}
\la_{k+1} & = \mbox{prox}_{\mu \tau_k h^*} \left(\la_k +\mu\tau_k D z_k\right), \\
z_{k+1} & = {\rm \Pi}_\C \left((1-\theta_k) z_k +\mu\theta_k \left(\xi - D^T \la_{k+1}\right)\right),
\end{align*}
where ${\rm \Pi}_\C$ denotes the projection onto $\C$ and $\mbox{prox}_{t h^*}$ for any $t>0$ denotes the proximal
mapping of $h^*$ defined by
\begin{equation*}
\mbox{prox}_{t h^*}(\bar \la) := \arg \min_{\la\in \X\times \X} \left\{h^*(\la) + \frac{1}{2 t} \|\la -\bar \la\|_F^2\right\},
\quad \bar \la \in \X\times \X.
\end{equation*}
For our $h$, it is easily seen that $h^*(\la) = \iota_{\mathcal Z} (\la)$, where
\begin{equation*}
{\mathcal Z}:= \left\{(u, v)\in \X \times \X: u_{i,j}^2 + v_{i,j}^2 \le 1 \mbox{ for } i=1, \cdots, I; j = 1, \cdots, J\right\}.
\end{equation*}
Thus $\mbox{prox}_{t h^*} (\la) = {\rm \Pi}_{\mathcal Z} (\la)$, where, for any $\la:=(u, v)\in \X\times \X$,
${\rm \Pi}_{\mathcal Z} (\la) = (y, z)$ with
\begin{equation*}
y_{i,j} = \frac{u_{i,j}}{\max\left\{1, \sqrt{u_{i,j}^2 +v_{i,j}^2}\right\}}, \quad
z_{i,j} = \frac{v_{i,j}}{\max\left\{1, \sqrt{u_{i,j}^2 +v_{i,j}^2}\right\}}.
\end{equation*}
To achieve a fast convergence, it was suggested in \cite{ZC2008} to choose the step sizes as
\begin{equation*}
\tau_k = 0.2+0.08k, \qquad \theta_k = \left(0.5- \frac{5}{15+k}\right)/\tau_k.
\end{equation*}
The convergence of the PDHG method, under such a choice of the step sizes, was confirmed in \cite{BR2012}.

In order to terminate the PDHG method, we use the relative duality gap. The dual function is given by
\begin{align*}
\Psi_D(\la) & = \min_{z\in \X} {\rm \Phi}(z, \la)\\
& = \left\{\begin{array}{lll}
\frac{1}{2\mu} \|(I-{\rm \Pi}_C)(\mu \xi -\mu D^T \la)\|_F^2+\frac{\mu}{2} \|\xi-D^T \la\|_F^2,  & \la \in {\mathcal Z},\\[0.8ex]
-\infty, & \la \not\in {\mathcal Z}.
\end{array}\right.
\end{align*}
Given a primal feasible point $z\in \X$ and a dual feasible point $\la\in \X\times \X$, we define the relative duality gap
\begin{equation*}
G_{rel}(z, \la) = \frac{\Psi_P(z)-\Psi_D(\la)}{|\Psi_P(z)|+|\Psi_D(\la)|}.
\end{equation*}
If there exists a feasible $(x, \la)$ such that $G_{rel}(x, \la)\le \eta$ for some $0<\eta<1$, then, by using the fact
that $\Psi_D(\la) \le \min_{z\in \X} \Psi_P(z)$, we can conclude that $x$ satisfies (\ref{min_inexact})
with
\begin{equation*}
\ep = \frac{2\eta}{1-\eta} \min_{z\in \X} \Psi_P(z).
\end{equation*}

\section{\bf Numerical simulations}

In this section we provide numerical simulations to test our theoretical result on Algorithm \ref{alg1} using
inexact inner solvers and to illustrate the acceleration effect embraced in Algorithm \ref{alg3}.
Our simulations were done by using MATLAB R2012a on a Lenovo laptop with Intel(R) Core(TM) i5 CPU 2.30GHz and 6GB
memory.

\begin{example}\label{ex1}
{\rm
We first consider the application of our algorithms in computed tomography which consists in determining
the density of cross sections of human body by measuring the attenuation of X-rays as they propagate
through the biological tissues \cite{N2001}. Mathematically, it requires to determine a function $f$
supported on a bounded domain from its Radon transform
\begin{align*}
{\mathscr R} f(\theta, \rho) &:= \int_{x\cos \theta+ y\sin \theta =\rho} f(x, y) d s \\
&=\int_{-\infty}^\infty f(\rho \cos \theta-t \sin \theta, \rho\sin\theta + t\cos \theta) dt,
\end{align*}
where $\theta \in [0, 2\pi)$ and $\rho \in {\mathbb R}$. The most prominent method in computed tomography
is the filtered backprojection (FBP) algorithm which is based on the explicit inversion formula and therefore
is fast and inexpensive. However, the FBP algorithm is not robust with respect to noise, its accuracy requires
patients to be over-exposed to X-rays, and it is difficult to incorporate a prior information into the algorithm.

In order to apply our algorithm to solve CT problems, we need a discrete model. We assume that the image
is supported on a square domain in ${\mathbb R}^2$ which is divided into $q \times q$ pixels numbered
from $1$ to $Q = q^2$. Each pixel $i$ is assigned a constant value $f_i$ such that the vector $f=(f_1, \cdots, f_Q)^T$
is a discrete version of the sought function. Assume that there are $M$ X-rays passing through the image.
Let $a_{ij}$ denote the length of the intersection of the $i$-th ray with the $j$-th pixel. Let $g_i$ be
the measurement of the attenuation for ray $i$ and let $g = (g_1, \cdots, g_M)^T$. Then
\begin{equation*}
g = A f
\end{equation*}
Note that $A$ is a sparse matrix of size $M \times Q$. We will apply our algorithms to solve this linear equation.

\begin{figure}[ht]
\centering
  \includegraphics[width = 0.48\textwidth]{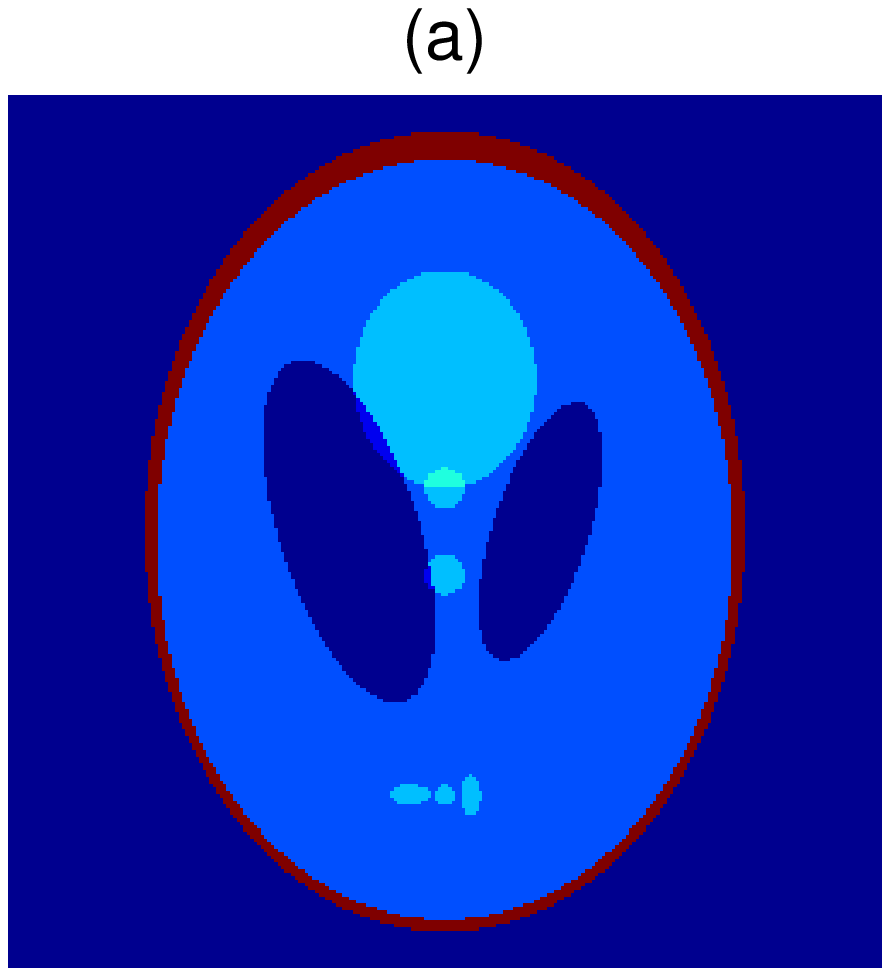}
  \includegraphics[width = 0.48\textwidth]{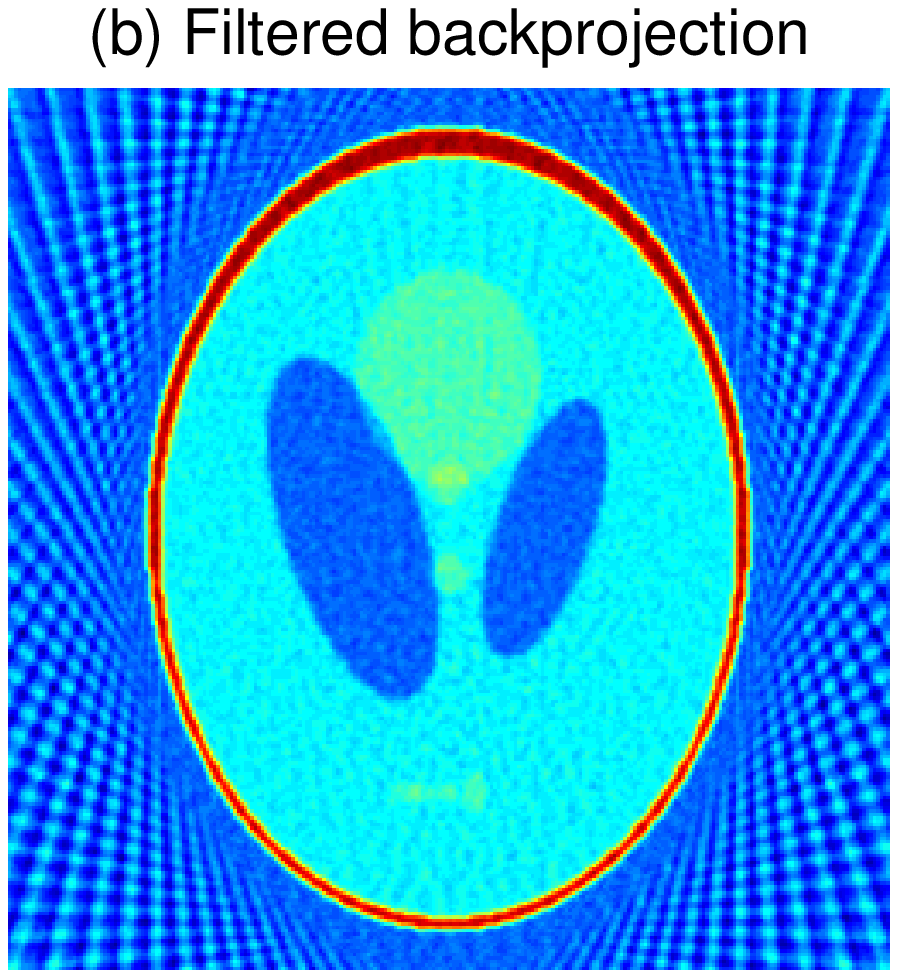}\\
  \includegraphics[width = 0.48\textwidth]{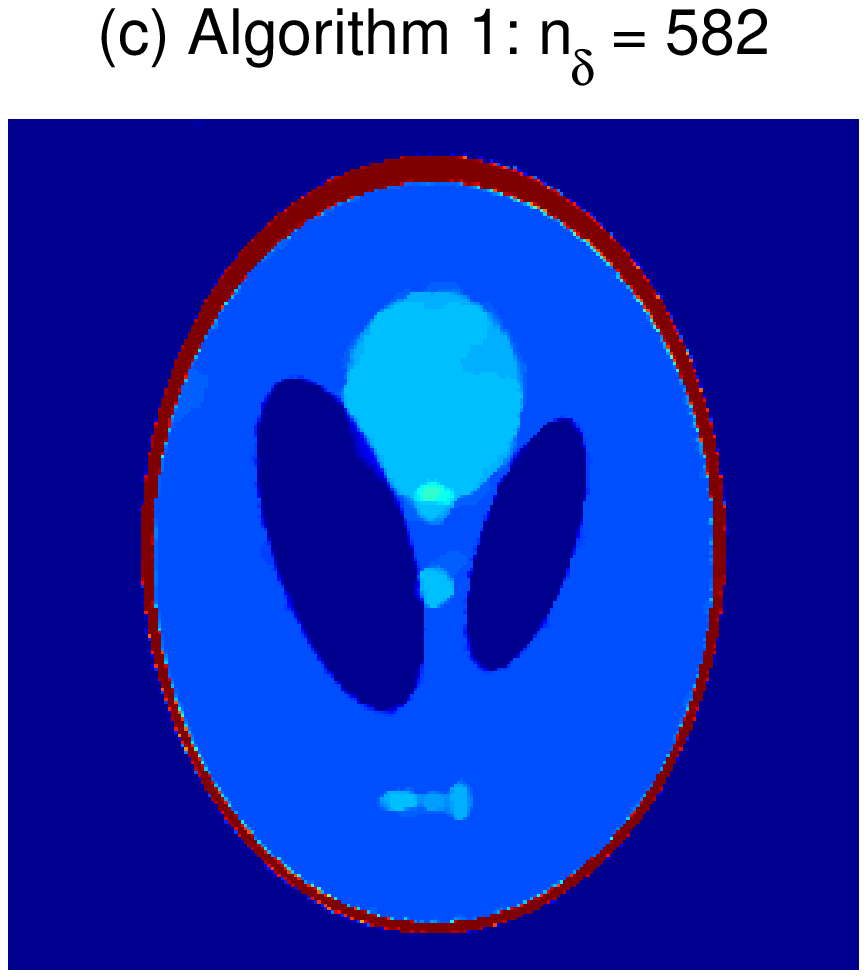}
   \includegraphics[width = 0.48\textwidth]{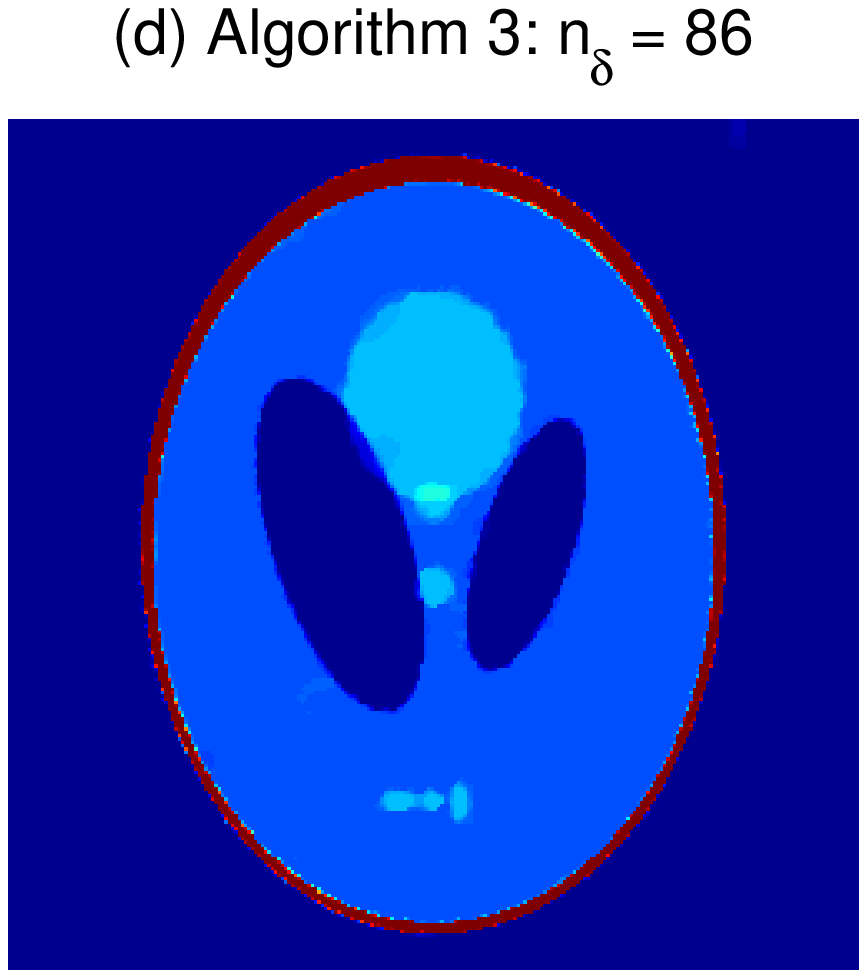}
  \caption{Reconstruction results by Algorithm \ref{alg1}, Algorithm \ref{alg3} and the filtered backprojection
  for the computed tomography in Example \ref{ex1}.}\label{fig1}
\end{figure}

The formation of the matrix $A$ depends on the scan geometry. In the following numerical simulations
we consider only test problems that model the standard 2D parallel-beam tomography; other scan geometries
can be done similarly. For the simulations, the true image $f_*$ is taken to be the Shepp-Logan phantom
shown in Figure \ref{fig1}(a) discretized on a $256\times 256$ pixel grid with its pixel values varying
in the interval $[0,1]$. This phantom is widely used in evaluating tomographic reconstruction algorithms.
We consider a full angle problem using 45 projection angles evenly distributed
between 1 and 180 degrees, with 367 lines per projection. The function \texttt{paralleltomo} in the MATLAB
package AIR TOOLS \cite{H2012} is used to generate the sparse matrix $A$, which has the dimension size
$M= 16515$ and $Q = 65536$.

\begin{figure}[ht]
\centering
  \includegraphics[width = 0.85\textwidth, height = 0.6\textwidth]{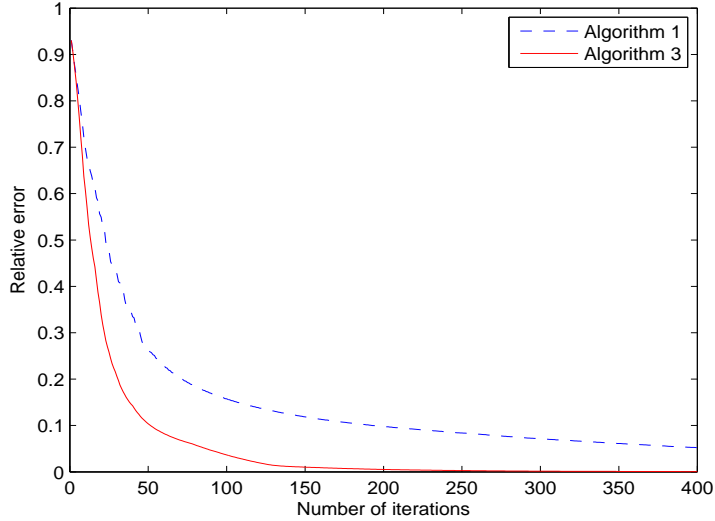}
  \caption{The plot of relative error of the solution versus iteration number by Algorithm \ref{alg1} and Algorithm \ref{alg3}
  for Example \ref{ex1}}\label{fig2}
\end{figure}

Let $g = A f_*$. We add Gaussian noise to $g$ to generate a noisy data $g^\delta$ with relative noise level
$\delta_{rel}:=\|g^\d-g\|_F/\|g\|_F = 0.01$ so that the noise level is $\d = \d_{rel} \|g\|_F$. We then
use $g^\d$ to reconstruct $f_*$ via Algorithm \ref{alg1} and Algorithm \ref{alg3} with $N=1$. We take
\begin{equation*}
\Theta(f) = \frac{1}{2\mu} \|f\|_F^2 + |f|_{TV} + \iota_\C(f)
\end{equation*}
with $\mu = 1$ and $\C= \{ f: f\ge 0\}$ and use the initial guess $f_0=\xi_0 =0$ and the parameters $\beta_0=
0.1/\mu$, $\beta_1 = 10$, $\sigma= 0.001$ and $\tau = 1.01$; we also take $\a = 5$ when using Algorithm \ref{alg3}.
The minimization problems associated with $\Theta$ are solved by the PDHG method which is terminated when
the relative duality gap is $\le (n+1)^{-2.2}$ at the $n$-th iteration.
In Figure \ref{fig1} (c) and (d) we present the reconstruction results by Algorithm \ref{alg1} and Algorithm \ref{alg3}
respectively. The both algorithms give satisfactory results. Algorithm \ref{alg1} terminates after 582 iterations
and takes $386$ seconds; while Algorithm \ref{alg3} terminates after 86 iterations and takes 26.6 seconds.
This clearly shows that Algorithm \ref{alg3} is much faster than Algorithm \ref{alg1}. As comparison, in
Figure \ref{fig1} (b) we include the reconstruction result by the FBP algorithm which is much faster but
the result is much worse.

To further illustrate the fast convergence property of Algorithm \ref{alg3}, we redo the numerical simulations
under the same situation but with exact data. We run $400$ iterations for both Algorithm \ref{alg1} and Algorithm
\ref{alg3}. The relation between the relative errors $\|f_n-f_*\|_F/\|f_*\|_F$ and the iteration numbers
is plotted in Figure \ref{fig2} which clearly shows that Algorithm \ref{alg3} has the acceleration effect.
}
\end{example}

\begin{example}\label{ex2}
{\rm We next consider the identification of the parameter $c$ in the boundary value problem
\begin{eqnarray}\label{PDE}
\left\{
  \begin{array}{ll}
    -\triangle u + cu = f \qquad  \mbox{in } \Omega, \\
    u = g  \qquad \mbox{on } \partial\Omega
  \end{array}
\right.
\end{eqnarray}
from an $L^2(\Omega)$-measurement of the state $u$, where $\Omega\subset\mathbb{R}^2$ is a bounded domain with Lipschitz boundary,
$f\in L^2(\Omega)$ and $g\in H^{3/2}(\Omega)$. This is a benchmark example of nonlinear inverse problems. We assume that the exact
solution $c^{\dag}$ is in $L^2(\Omega)$. This problem reduces
to solving $F(c) =u$, if we define the nonlinear operator $F: L^2(\Omega)\rightarrow  L^2(\Omega)$ by $F(c): = u(c)$,
where $u(c)\in H^2(\Omega)\subset  L^2(\Omega)$ is the unique solution of (\ref{PDE}). This operator $F$
is well defined on
\begin{equation*}
\D(F): = \left\{ c\in  L^2(\Omega) : \|c-\hat{c}\|_{ L^2(\Omega)}\leq \gamma_0 \mbox{ for some }
\hat{c}\geq0,\ \textrm{a.e.}\right\}
\end{equation*}
for some positive constant $\gamma_0>0$. It is known that $F$ is Fr\'{e}chet differentiable; the Fr\'{e}chet derivative
of $F$ and its adjoint are given by
\begin{equation}\label{Frechet}
F'(c)h  = -A(c)^{-1}(hF(c)) \quad \mbox{and} \quad F'(c)^* w  = -u(c) A(c)^{-1}w
\end{equation}
for $h, w\in L^2(\Omega)$, where $A(c): H^2(\Omega)\cap H_0^1(\Omega)\rightarrow  L^2(\Omega)$ is
defined by $A(c) u = -\triangle u +cu$ which is an isomorphism uniformly in ball $B_{2\rho}(c^{\dag})$
for small $\rho>0$. Moreover, Assumption \ref{A1} (c) holds for small $\rho>0$ (see \cite{EHN96}).

\begin{figure}[ht]
\centering
\includegraphics[width = 0.49\textwidth]{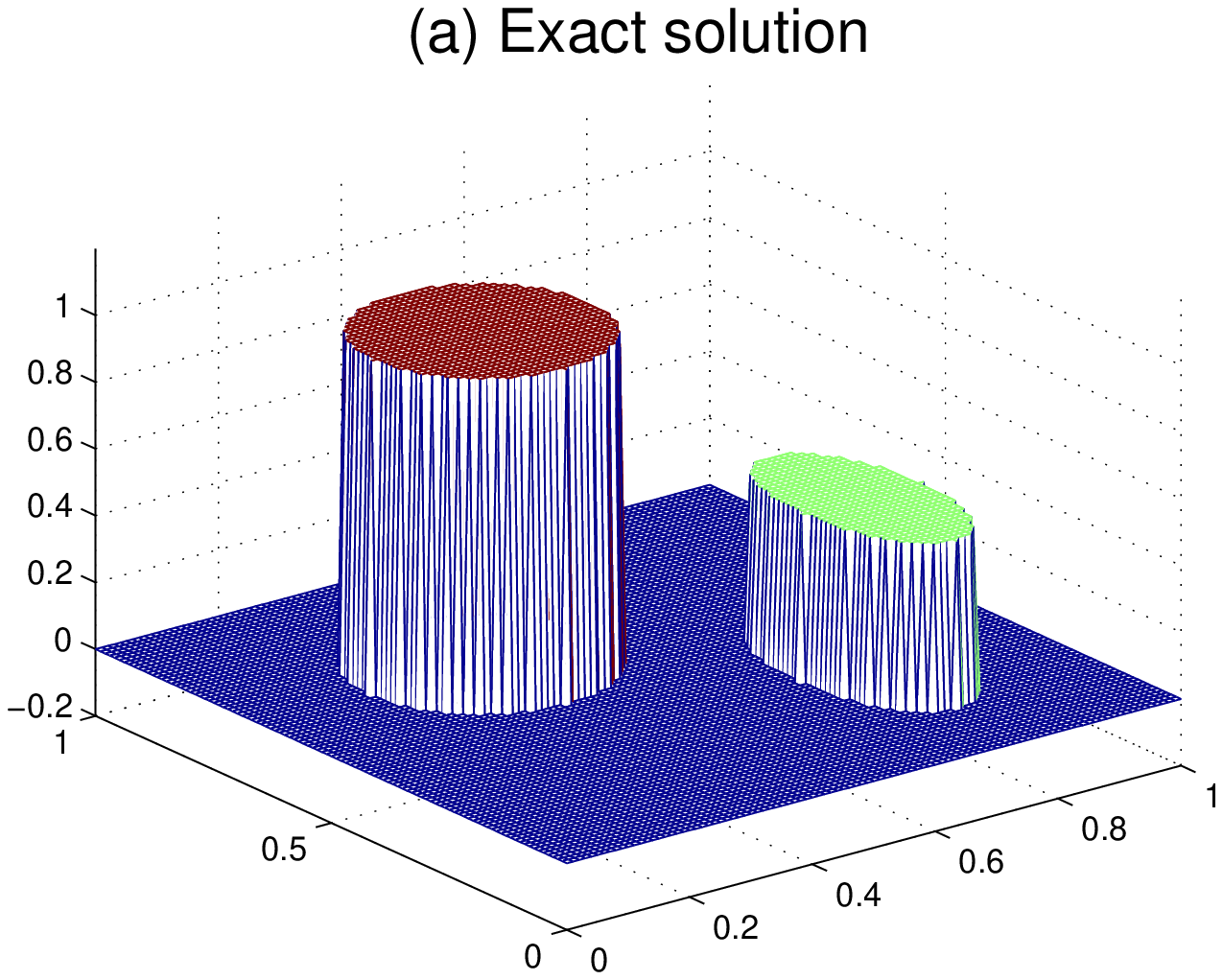}\\
  \includegraphics[width = 0.49\textwidth]{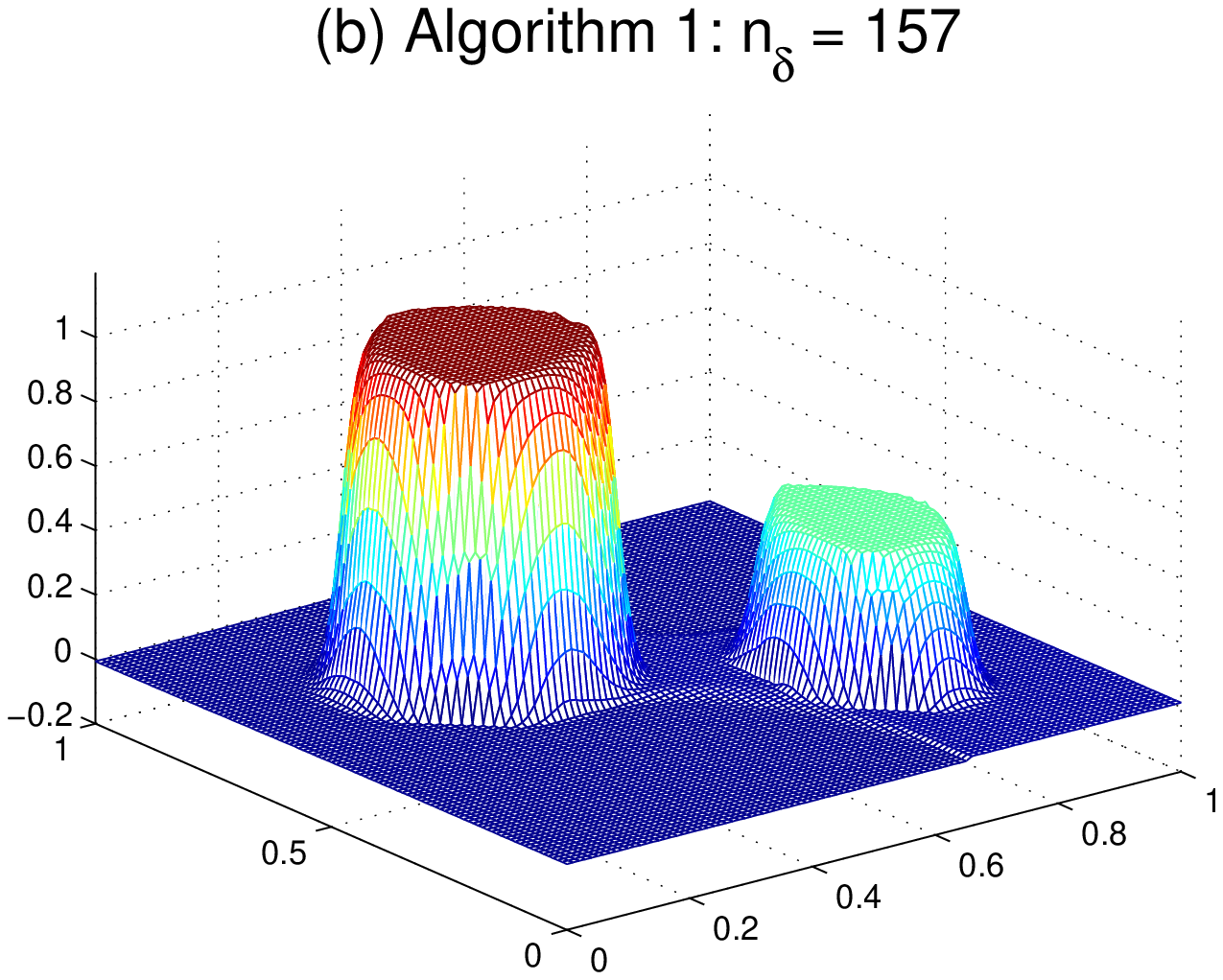}
   \includegraphics[width = 0.49\textwidth]{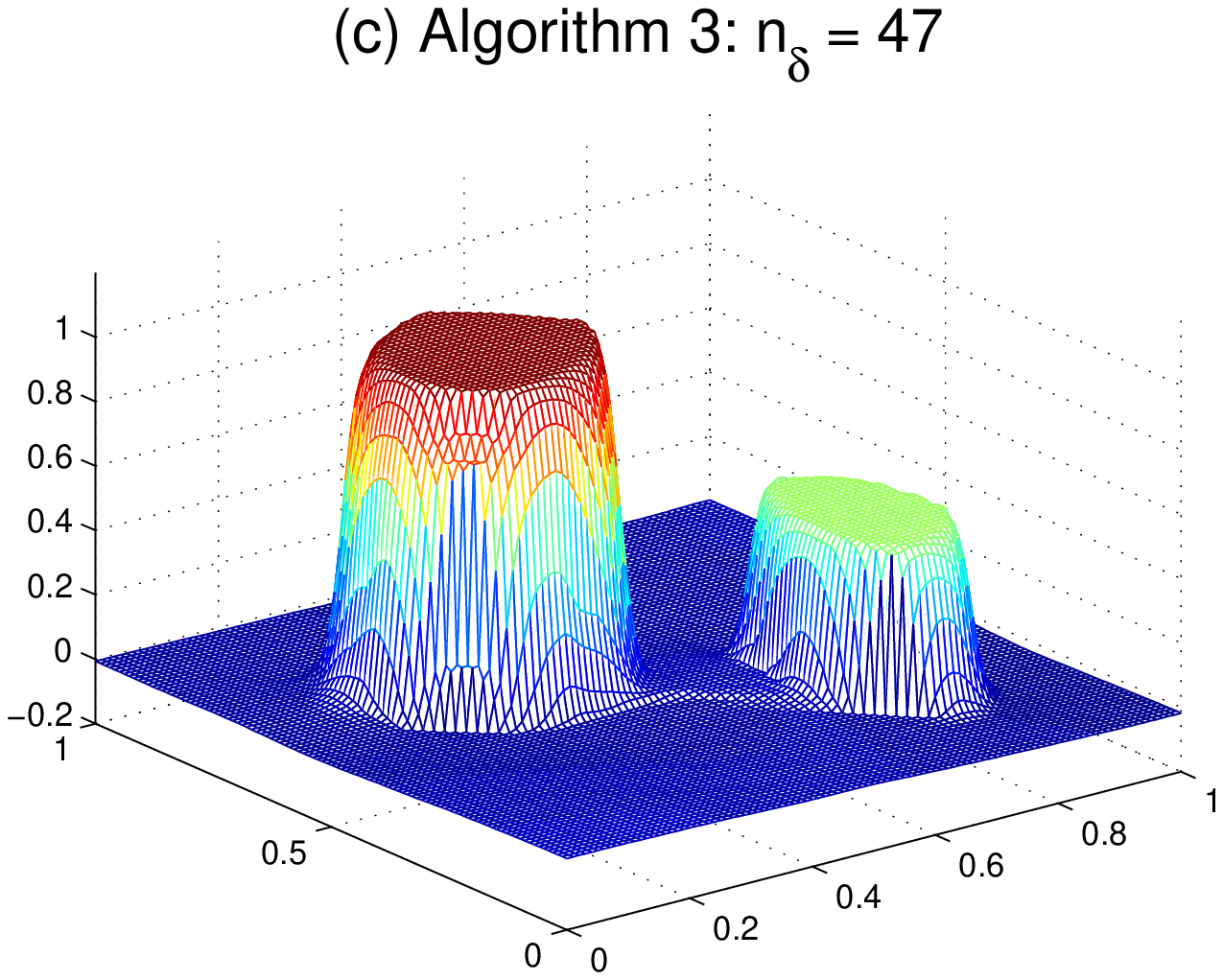}
  \caption{Reconstruction results by Algorithm \ref{alg1} and Algorithm \ref{alg3} for the parameter identification
    problem in Example \ref{ex2}.}\label{fig3}
\end{figure}

We will present our numerical results on $\Omega=[0,1]\times [0,1]$ with $g \equiv 1$ on $\p \Omega$
and
\begin{equation*}
f(x, y) = 200 e^{-10(x-0.5)^2 -10 (y-0.5)^2} \quad \mbox{ in } \Omega.
\end{equation*}
We assume the sough solution $c_*$ is a piecewise constant function as shown in Figure \ref{fig3} (a)
and reconstruct it by using a noisy data $u^\d$ with relative noise level $\d_{rel}:=\|u^\d-u\|_{L^2}/\|u\|_{L^2}
= 0.46\times 10^{-3}$. When applying Algorithm \ref{alg1} and Algorithm \ref{alg3} with $N=1$, we take
$\X = \Y= L^2(\Omega)$ and 
\begin{equation*}
\Theta(c) = \frac{1}{2\mu} \|c\|_{L^2(\Omega)}^2 + \int_\Omega |Dc|
\end{equation*}
with $\mu = 20$, where 
$$
\int_\Omega |Dc| = \sup\left\{\int_\Omega c \, \mbox{div} \varphi dx: \varphi\in C_0^1(\Omega, {\mathbb R}^2)
\mbox{ and } \|\varphi\|_{L^\infty} \le 1 \right\}
$$
denotes the total variation of $c$. We use the initial guess $c_0=\xi_0 =0$ and the parameters $\beta_0 =0.01/\mu$,
$\beta_1 =2\times 10^4$, $\sigma = 0.001$ and $\tau = 1.02$; we also take $\a=5$ when using Algorithm \ref{alg3}.
In order to carry out the computation, we need to discretize the problem. We divide $\Omega$ into $100\times 100$ 
small squares of equal size and solve all partial differential equations involved approximately by a finite difference method.
We also discretize $\Theta(c)$ so that $\|c\|_{L^2}$ is replaced by the Fr\"{o}benius norm of arrays
and $\int_\Omega |D c|$ is replaced by the discrete total variation given in Section 3.5. 
The corresponding minimization problems associated with the discrete $\Theta$ are solved by the PDHG method 
which is terminated as long as the relative duality gap is $\le (n+1)^{-1.5}$ at the $n$-th iteration.
In Figure \ref{fig3} (b) and (c) we report the computational results using Algorithm \ref{alg1} and
Algorithm \ref{alg3}. The both algorithms give satisfactory reconstruction results. Moreover,
Algorithm \ref{alg3} requires significantly less number of iterations than Algorithm \ref{alg1} which
demonstrates that Algorithm \ref{alg3} has the acceleration effect.

To further illustrate the fast convergence property of Algorithm \ref{alg3}, we redo the above
computation using exact data. We perform 100 iterations for both Algorithm \ref{alg1} and Algorithm \ref{alg3}.
The curve of the relative error $\|c_n-c_*\|_{L^2}/\|c_*\|_{L^2}$ versus the iteration number $n$
is plot in Figure \ref{fig4} which clearly indicates that Algorithm \ref{alg3} converges faster than
Algorithm \ref{alg1}.

\begin{figure}[ht!]
     \begin{center}
        {
           \includegraphics[width = 0.9\textwidth, height = 0.6\textwidth]{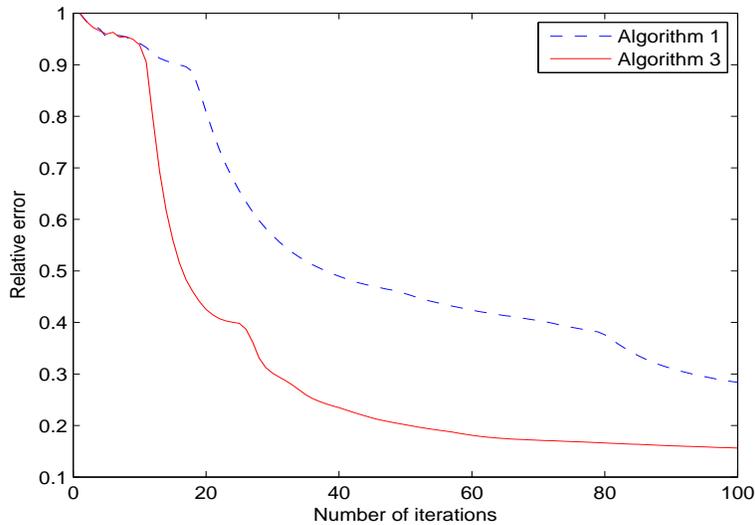}
        }
    \end{center}
    \caption{The plot of relative error of the solution versus iteration number by Algorithm \ref{alg1} and Algorithm \ref{alg3}
    for Example \ref{ex2}.}%
   \label{fig4}
\end{figure}
}
\end{example}

\section*{\bf Acknowledgement}

This work is partially supported by the Discovery Project grant of Australian Research Council.

\end{document}